\numberwithin{equation}{section}
\numberwithin{figure}{section}
\numberwithin{table}{section}
\renewcommand{\geq}{\geqslant}
\renewcommand{\leq}{\leqslant}
\newcommand{\ds}{\displaystyle} 
\newcommand{\be}{\begin{equation}}
\newcommand{\ee}{\end{equation}}
\theoremstyle{plain}
\newtheorem{THEOREM}{Theorem}[section]
\newtheorem{theorem}[THEOREM]{Theorem}
\newtheorem{corollary}[THEOREM]{Corollary}
\newtheorem{lemma}[THEOREM]{Lemma}
\newtheorem{proposition}[THEOREM]{Proposition}
\theoremstyle{definition}
\newtheorem{definition}[THEOREM]{Definition}
\theoremstyle{remark}
\theoremstyle{question}
\newtheorem{remark}[THEOREM]{Remark}
\newtheorem{example}[THEOREM]{Example}
\newcommand{\R}{\mathbb{R}}
\newcommand{\calC}{{\mathcal C}}
\newcommand{\calP}{{\mathcal P}}
\newcommand{\bbC}{{\mathbb C}}
\newcommand{\bbD}{{\mathbb D}}
\newcommand{\bbE}{{\mathbb E}}
\newcommand{\bbF}{{\mathbb F}}
\newcommand{\bbH}{{\mathbb H}}
\newcommand{\bbI}{{\mathbb I}}
\newcommand{\bbJ}{{\mathbb J}}
\newcommand{\bbL}{{\mathbb L}}
\newcommand{\bbN}{{\mathbb N}}
\newcommand{\bbR}{{\mathbb R}}
\newcommand{\bbT}{{\mathbb T}}
\newcommand{\bbU}{{\mathbb U}}
\newcommand{\bx}{\mathbf{x}}
\newcommand{\bk}{{\mathbf k}}
\newcommand{\bF}{{\mathbf F}}
\newcommand{\by}{\mathbf{y}}
\newcommand{\rd}{\textnormal{d}}
\newcommand{\bu}{\mathbf{u}}
\newcommand{\bw}{\mathbf{w}}
\newcommand{\nicehf}{\nicefrac{1}{2}}
\newcommand{\vnorm}[1]{[\![#1]\!]}
\newcommand{\ddt}{\frac{\textnormal{d}}{\textnormal{d}t}}
\newcommand{\Real}{Re\hspace*{0.08cm}}
\newcommand{\LN}{{\bbL}_{{}_N}}
\newcommand{\DbbL}{\Delta t\hspace*{0.04cm}\bbL}
\newcommand{\DLN}{\Delta t\hspace*{0.04cm}\LN}
\newcommand{\DpN}{\bbD^+_{N}}
\newcommand{\DDpN}{\Delta t\hspace*{-0.04cm}\cdot \hspace*{-0.04cm}a\hspace*{0.04cm}\DpN} 
\newcommand{\DoN}{\bbD^0_{N}}
\newcommand{\DDoN}{\Delta t\hspace*{-0.04cm}\cdot \hspace*{-0.04cm}a\hspace*{0.04cm}\DoN}
\newcommand{\DFN}{\bbD^{\bbF}_{N}}
\newcommand{\EN}{\bbE_{{}_N}}
\newcommand{\diffQ}{Q} %difference operator 
\newcommand{\QEN}{\diffQ(\EN)}
\newcommand{\DQEN}{\Delta t\hspace*{-0.04cm}\cdot \hspace*{-0.04cm}\QEN}
\newcommand{\diffq}{q}  %difference coefficients
\newcommand{\HN}{{\bbH}_{{}_N}}
\newcommand{\calH}{{\mathcal H}}
\newcommand{\calHN}{\calH_{{}_N}}
\newcommand{\TN}{{\bbT}_N}
\newcommand{\calT}{{\mathcal T}}
\newcommand{\calTN}{\calT_{{}_N}}
\newcommand{\UN}{{\bbU}_{{}_N}}
\newcommand{\RN}{{\bbR}_{{}_N}}
\newcommand{\Ps}{{\calP}_s}
\newcommand{\sA}{\mathscr{A}}
\newcommand{\parm}{q}
\newcommand{\mesh}{\delta}  %c_{{}_\Delta}
\newcommand{\const}{K} %different stability bound independent of N
\newcommand\bigzero{\makebox(0,0){\text{\huge0}}}
\newcommand{\CFL}{\calC} %{\textnormal C}_{\textnormal FL}}
\begin{document}

\title[Runge-Kutta methods are stable]{Runge-Kutta methods are stable}

\author{Eitan Tadmor}
\address{Department of Mathematics and Institute for Physical Science \& Technology\newline \hspace*{0.3cm}   University of Maryland, College Park}
\email{tadmor@umd.edu}

\date{\today}

\subjclass{Primary, 65M10; Secondary, 35A12}

\keywords{L2-stability, resolvent condition,  Runge-Kutta methods, region
of absolute stability, numerical range,  spectral sets, finite-difference schemes, method of lines.}

\thanks{\textbf{Acknowledgment.} Research was supported in part by ONR grant N00014-2112773.}

\begin{abstract}
We prove that Runge-Kutta (RK) methods for numerical integration of  arbitrarily large systems of Ordinary Differential Equations are linearly stable. 
Standard stability arguments --- based on spectral analysis, resolvent condition or strong stability,  fail to secure the stability of  arbitrarily large RK systems. We explain the  failure of   different approaches, offer a new stability theory and demonstrate a few examples.
\end{abstract}

\maketitle
\setcounter{tocdepth}{1}
\tableofcontents

%\ifx%%%
\section{Introduction --- the quest for stability}
Runge-Kutta (RK) methods are the methods of choice for numerical integration  of systems of Ordinary Differential Equations (ODEs). In particular, such methods are used routinely for integration of large systems of ODEs encountered in various applications, for example ---  in molecular dynamics in Chemistry, in many particle systems in Physics, in phase field dynamics in Materials Science, and in spatial discretization of time-dependent PDEs of increasingly large dimension, so-called ``method of lines''. The stability of RK methods encoded in terms of their \emph{region of absolute stability} is well documented, \cite{HNW1993,Ise1996,But2008}. We therefore devote this Introduction to clarify the claim alluded to in the title. 

We consider systems of ODEs, 
\[
\dot{\by}=\bF(t,\by),
\]
 which govern an $N$-vector of unknown solution, $\by(t) \in \R^N$, subject to prescribed initial data, $\by(t_0)=\by_0$. As a canonical example for one of the most widely used numerical integrators  we mention the 4-stage RK method,  which computes an approximate solution, $\{\bu_n=\bu(t_n)\}_{n>0}$, at successive time steps $t_{n+1}:=t_n+\Delta t$, \cite[\S II.1]{HNW1993}, 
\be\label{eq:nonlinear-RK4}
\bu_{n+1}=\bu_n+ \frac{\Delta t}{6}\big(\bk_1+2\bk_2+2\bk_3+\bk_4\big), \qquad 
\left\{\begin{array}{l}
\bk_1=\bF(t_n,\bu_n)\\
\displaystyle \bk_2=\bF\big(t_{n+\nicehf}, \bu_n+(\nicefrac{\Delta t}{2})\bk_1\big)\\
\displaystyle \bk_3=\bF\big(t_{n+\nicehf}, \bu_n+(\nicefrac{\Delta t}{2})\bk_2\big)\\
\bk_4=\bF\big(t_{n+1}, \bu_n+\Delta t \bk_3\big).
\end{array}\right.
\ee
The \emph{linearized stability analysis}  examines the behavior of  \eqref{eq:nonlinear-RK4} for linear systems, $\bF(t,\by)=\LN\by$, 
 \be\label{eq:linear}
 \dot{\by}=\LN\by,
 \ee
 where \eqref{eq:nonlinear-RK4} is reduced to
 \be\label{eq:RK4}
 \bu_{n+1}=\left({\mathbb I}+\DLN+\frac{1}{2}(\DLN)^2+\frac{1}{6}(\DLN)^3 + \frac{1}{24}(\DLN)^4\right)\bu_n, \quad n=0,1,\ldots.\tag{RK4}
 \ee
The  corresponding iterations for a general $s$-stage \emph{explicit} RK method  take the form 
\be\label{eq:RKs}
\bu_{n+1}=\Ps(\DLN)\bu_n, \quad n=0,1,2,\ldots, \qquad \Ps(z):=\sum_{k=0}^s a_kz^k, \ \ a_k \in \bbR, \ a_s\neq 0.
\ee
Different $\{a_k\}_{k=0}^s$ dictate different RK methods with emphasize on different aspects of accuracy, efficiency and stability.
The resulting  $s$-stage RK methods, \eqref{eq:RKs}, involve  $N\times N$ matrices, denoted $\LN$ to highlight the fact that they are parameterized with respect to $N$. As already noted above, such large matrices are often encountered in applications, notably in contemporary problems which involve high-dimensional data sets/neural networks, e.g., \cite{HR2017,E2017, CRBD2018,Mis2018}.  We therefore pay particular attention to the question  of RK stability that is uniform with respect  to the increasingly large dimension $N$.

 Following \cite[\S2]{Tad2002}, we consider \eqref{eq:linear} for the class of \emph{semi-bounded} $\LN$\!\!'s, namely --- $\LN$\!\!'s for which there exist  constants $\eta,\const_{\bbH}>0$ independent of $N$, and \emph{uniformly} positive-definite symmetrizers, $\HN$\!\!'s,  such that\footnote{Throughout the paper, we use  $\const_{\square}$ to denote different constants which are independent of $N$.},
 \[%\be\label{eq:semi-bounded}
 \HN\LN^\top+\LN\HN \leq 2\eta\HN, \qquad 0<\const^{-1}_{\bbH}\leq \HN\leq \const_{\bbH}.
 \]%\ee
 It follows that the solutions of the corresponding semi-bounded  ODEs \eqref{eq:linear} subject to arbitrary initial data $\by(0)=\by_0$, satisfy
 \[%\begin{equation}\label{eq:stable}
 |\by(t)|_{\ell^2} \leq \const_{\bbH}e^{\eta t}|\by_0|_{\ell^2}.
 \]%\end{equation}
Replacing $\LN$ with $\LN\!-\eta\bbI$, allows us to consider without loss of generality the case $\eta=0$, corresponding to \emph{negative definite} $\LN$\!\!'s, 
 \be\label{eq:negative}
 \HN\LN^\top+\LN\HN \leq 0, \qquad 0<\const^{-1}_{\bbH}\leq \HN\leq \const_{\bbH}.
 \ee
Solutions of ODE governed by such negative\footnote{Throughout the work, we use the term  `negative' for short of `negative definite'.} $\LN$\!\!'s satisfy subject to arbitrary initial data $\by_0$, 
 \begin{equation}\label{eq:stable}
 |\by(t)|_{{}_{\ell^2}} \leq \const_{\bbH}|\by_0|_{{}_{\ell^2}}.
 \end{equation}
{\bf Stability of RK scheme}. The notion of stability of  RK schemes requires the numerical  solution to satisfy the bound corresponding to \eqref{eq:stable}. To this end, one is focused on a family of negative $\LN$\!\!'s parametrized by their dimension $N$. The  $s$-stage  RK scheme \eqref{eq:RKs} is \emph{stable}, if  there exist  constants,  $\const_{\bbL}>0$ and  $\CFL_s>0$ independent of $N$, such that   solutions  of \eqref{eq:RKs} subject to arbitrary  initial data $\bu_0$ satisfy, for small enough time step $\Delta t$,
\be\label{eq:stable-RK-scheme}
\textnormal{Stability of RK scheme}: \qquad 
|\bu_n|_{{}_{\ell^2}}\leq \const_{\bbL}|\bu_0|_{{}_{\ell^2}}, \qquad n=0,1,2,\ldots.
\ee
The restriction of having small enough time step is encoded in terms of the bound 
\be\label{eq:CFL}
\Delta t \cdot \|\LN\|\leq \CFL_s;
\ee
in the context of method of lines, the time-step restriction is related to the celebrated Courant-Friedrichs-Levy (CFL) condition, \cite{CFL1928}, and we shall therefore often refer to the time-step restriction \eqref{eq:CFL} as a CFL condition.\newline
The notion of  stability encoded in \eqref{eq:stable-RK-scheme} amounts to the question of power-boundedness of $\Ps(\DLN)$,
\be\label{eq:power-boundedness}
\|\Ps^n(\DLN)\|\leq \const_{\bbL}, \qquad n=0,1,2,\ldots.
\ee
\begin{remark}[{\bf Stability and linearization}] The general notion of stability for semi-bounded $\LN$\!\!'s, limits the exponential stability bound to a finite time interval,
\[
|\bu_n|_{{}_{\ell^2}}\leq \const_{\bbL}e^{\eta t}|\bu_0|_{{}_{\ell^2}},\qquad n\cdot \Delta t\leq t.
\]
 Since we restrict attention to negative $\LN$\!\!'s, we may as well let $n\in {\mathbb N}$. This notion of stability is invariant against low-order perturbations,  \cite{Kre1962},\cite{Str1964}, and therefore allows  to recover the stability  of RK schemes for smooth solutions of fully nonlinear problems, 
$\dot{\by}=\bF(t,\by)$. To this end, one  can linearize and freeze coefficients at arbitrary $t=t_*$, 
arriving at the linearized system \eqref{eq:linear}, 
\[
\dot{\by}=\LN\by \ \ \textnormal{with} \ \  
 \LN=\frac{\partial \bF\big(t_*,\by(t_*)\big)}{\partial \by}.
 \]
 We shall not dwell on the details, expect for referring to our discussion on stability in presence of variable coefficients in section \ref{sec:periodic-var} below. This motivates our focus on the  question of  
 linearized stability, where $\LN$ is a substitute for the $N\times N$ gradient matrix freezed at arbitrary state.
\end{remark}

 \subsection{Spectral stability analysis}
The standard approach  to address the question of power-boundedness is  \emph{spectral analysis},  in which \eqref{eq:power-boundedness} requires  $\displaystyle \mathop{\max}_{1\leq k\leq N}|\lambda_k\big(\Ps(\DLN)\big)|\leq 1$. By the spectral mapping theorem, 
\be\label{eq:spectral-mapping}
\lambda_k\big(\Ps(\DLN)=\Ps\big(\Delta t \lambda_k(\LN)\big),
\ee
 which  leads to the necessary stability condition, requiring  small enough time-step dictated by the \emph{region of absolute stability} associated with \eqref{eq:RKs},
\be\label{eq:absolute-stability}
\Delta t \cdot\lambda_k(\LN) \in \sA_s, \quad k=1,2,\ldots, N, \qquad \sA_s:=\{z\in {\mathbb C}\ : \ |\Ps(z)|\leq 1\}.
\ee
Conversely, consider the favorite scenario in which $\LN$ is diagonalizable, 
\[
\TN\LN\TN^{-1}=\Lambda, \qquad \Lambda=\left[\begin{array}{ccccc}
\lambda_1(\LN)& 0 & \ldots & \ldots &  0\\
0 & \hspace*{-0.4cm} \lambda_2(\LN) &\ddots &  & \vdots\\
\vdots &  \ddots &  & \ddots & \vdots\\
\vdots & \ddots &   \ddots & \ddots & 0\\
0 & \ldots & \ldots & 0 & \lambda_N(\LN)
\end{array}
\right].
\]
Then $\Ps(\DLN)=\TN^{-1}\Ps(\Delta t\Lambda)\TN$ and \eqref{eq:absolute-stability}  implies 
\be\label{eq:CCbound}
\|\Ps^n(\DLN)\|= \|\TN^{-1}\Ps^n(\Delta t\Lambda)\TN\|\leq 
\|\TN^{-1}\|\cdot \|\TN\|.
\ee
This guarantees the stability of  RK schemes for systems of  finite \emph{fixed} dimension\footnote{The precise necessary and sufficient  characterization  for  power-boundedness of a single matrix, $\|\calP^n\|\leq \const$, requires that the eigenvalues $\lambda_k(\calP)$ are inside the unit disc and those on the unit circle are simple; the constant $\const$ may still depend on the dimension of $\calP$.}. However, here we insist that the stability  sought in  \eqref{eq:stable-RK-scheme}  will apply uniformly for   increasingly large systems,
and since the condition number on the right of \eqref{eq:CCbound}, $\|\TN^{-1}\|\cdot \|\TN\|$, may grow with $N$,  the spectral  condition \eqref{eq:absolute-stability} is not enough to secure the desired   uniform-in-$N$ stability bound. Indeed, as we  elaborate in section \ref{sec:spectral-not-enough} below, the  general question of stability, uniformly in $N$, \underline{cannot} be addressed solely in terms of  spectral analysis.

\subsection{Resolvent stability} We now appeal to a stronger notion of stability of RK method.  An $s$-stage  RK method $\Ps(\cdot)$ is \emph{stable} if the corresponding RK schemes  \eqref{eq:RKs} are stable  for \underline{all} negative $\LN$\!\!'s,
 \be\label{eq:stable-RK-method} \textnormal{Stability of RK method}: \qquad 
 \|\Ps^n(\DLN)\|\leq \const_{\bbL} \quad \textnormal{for all negative} \ {\LN}\!\!\textnormal{'s}.
 \ee
 Observe that we are making a distinction between the stability of RK \emph{scheme} --- which examines the boundedness of RK protocol $\Ps^n(\DLN)$ for a specific family of negative $\LN$\!\!'s,  vs. the stability of RK \emph{method} --- which examines the behavior of RK protocol $\Ps^n(\Delta t\ \cdot)$, for \emph{all} negative $\LN$\!\!'s. 
 
 This stronger notion of stability restricts the class of stable RK methods. In particular, 
 their stability question should apply to the scalar ODEs,
$\dot{y}=\lambda y$, for \underline{all} negative $\Real \lambda \leq 0$, which in turn implies that  \eqref{eq:absolute-stability}  must hold for purely imaginary $\lambda=i\sigma$, so that \mbox{$|\Ps(i\Delta t \sigma)|\leq 1$}, for small enough step-size, $\Delta t$. In other words, a stable RK method \emph{must} satisfy the following interval condition.
\begin{definition}[{\bf Imaginary Interval condition}\footnote{So-called ``local stability along the imaginary line'' in \cite[Definition 2.1]{KS1992}.}]\label{def:interval-condition} A Runge-Kutta method is said to satisfy the \emph{imaginary interval condition} if there exists a constant $R_s>0$ such that
\be\label{eq:along-iaxis}
|\Ps(i\sigma)|\leq 1, \qquad -R_s\leq \sigma \leq R_s.
\ee
\end{definition}
In other words, the region of absolute stability of a stable RK method must contains a non-trivial  interval along the imaginary axis
$[-iR_s, iR_s]\subset  \sA_s$.
This secures  the stability of RK method for scalar hyperbolic ODEs, $\dot{y}=i\sigma y$, with small enough step-size $\Delta t \sigma <R_s$.

\smallskip\noindent
The interval condition  excludes the standard 1-stage forward Euler method 
 (for historical perspective of Euler's method which dates back to 1768 see \cite[\S1]{Wan2010}), 
\be\label{eq:FE}\tag{RK1}
 \textnormal{Forward Euler}: \hspace{1cm} \bu_{n+1}=\left(\bbI+\DLN\right)\bu_n,
 \ee
 for which $\calP_1(z)=1+z \ \leadsto \ |\calP_1(i\sigma)|>1$ for all $\sigma\neq 0$. The imaginary interval condition \eqref{eq:along-iaxis} also excludes the 2-stage Heun's method \cite[\S 8.3.3]{DB1974} (also known as modified Euler method),
\be\label{eq:RK2}\tag{RK2}
\textnormal{Heun method}: \quad \bu_{n+1}=\left(\bbI+\DLN+\frac{1}{2}(\DLN)^2\right)\bu_n,  
\ee
since $\calP_2(z)=1+z+\frac{1}{2}z^2 \ \leadsto \ |\calP_2(i\sigma)|>1$ for all $\sigma\neq 0$.\newline
On the other hand, the 3-stage Kutta method, %\cite{xxx}
\be\label{eq:RK3}
\textnormal{Kutta method}: \quad \bu_{n+1}=\left(\bbI+\DLN+\frac{1}{2}(\DLN)^2+\frac{1}{6}(\DLN)^3\right)\bu_n,\tag{RK3}
\ee
as well as the 4-stage Runge-Kutta method, \eqref{eq:RK4}, and its higher-order embedded version RK45 of Dormand-Prince method \cite{DP1980,KS1992,HNW1993}, do satisfy the interval condition with $R_3=\sqrt{3}$, and respectively, $R_4=2\sqrt{2}$; this is depicted  in figure \ref{fig:absRK14}. A precise characterization of    general $s$-stage RK methods satisfying the interval condition was given in \cite[Theorem 3.1]{KS1992} and will be recalled in \eqref{eq:interval-condition} below. 
\begin{figure}
\includegraphics[scale = 0.62]{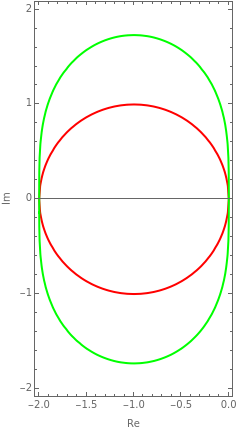} \hspace*{0.5cm}
\includegraphics[scale = 0.62]{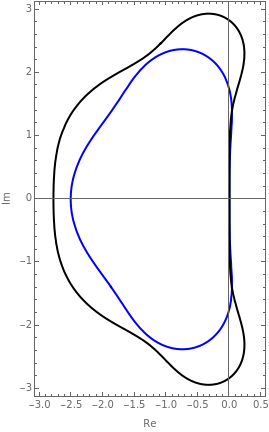}
\caption{Regions of absolute stability, $\sA_s, \ s=1,2$ (left) and $s=3,4$ (right)}\label{fig:absRK14}
\end{figure}

\noindent
The interval condition \ref{def:interval-condition} is necessary for stability of a RK method.  Kreiss \& Wu, \cite{KW1993}, proved the converse  in the sense  that the interval condition is sufficient for \emph{resolvent stability}; namely --- \eqref{eq:along-iaxis} implies that there exists  constants $K_R>0$ and $0<\CFL_s<R_s$, independent of $N$, such that  for small step-size, 
\be\label{eq:resolvent-stability}
\|\big(z\bbI-\Ps(\DLN)\big)^{-1}\| \leq \frac{\const_R}{|z|-1}, \qquad \forall|z|>1, \qquad \Delta t\cdot\|\LN\|\leq \CFL_s.
\ee
Resolvent stability guarantees the stability of RK schemes for systems of finite \emph{fixed} dimension,  in view of the Kreiss matrix theorem, \cite{Kre1962}, \cite[\S4.9]{RM1967}. Indeed, in \cite{Tad1981} and its improvement \cite{LT1984}, it was proved that \eqref{eq:resolvent-stability} implies
\be\label{eq:resolventN}
\|\Ps^n(\DLN)\|\leq 2e\const_R N, \qquad n=1,2,\ldots.
\ee
However, as we shall elaborate  in section \ref{sec:resolvent-not-enough} below,  the $N$-dependent bound on the right cannot be completely removed and  hence resolvent stability   does \underline{not} secure  the desired stability uniformly in growing $N$.

\subsection{Strong stability} A Runge-Kutta  scheme \eqref{eq:RKs} 
is \emph{strongly stable} if there exists $\const_\calT>0$ independent of $N$ such that
$\Ps(\DLN)$ is uniformly similar to a contraction,
\be\label{eq:contract}
	\|\calTN\Ps(\DLN)\calTN^{-1}\|\leq 1, \qquad \|\calTN^{-1}\|\cdot\|\calTN\|\leq \const_{\calT}.
\ee
A strongly stable RK scheme is clearly stable, for
\be\label{eq:sub-multiplicativity}
\|\Ps^n(\DLN)\|=\|\calTN^{-1}\big(\calTN\Ps(\DLN)\calTN^{-1}\big)^n\calTN\|
\leq \|\calTN^{-1}\|\cdot\|\calTN\|\leq \const_{\calT}
\ee
The choice $\calTN=\bbT_N$ recovers \eqref{eq:CCbound}  as a special case of 
\eqref{eq:sub-multiplicativity}.\newline
To secure strong stability it remains to construct a uniformly bounded symmetrizer $\calHN:=\calTN^*\calTN$ with $\ds 0< {\const^{-1}_\calT} \leq \calHN \leq \const_\calT$. 
We addressed  this issue in \cite{Tad2002}, proving the strong stability of the 3-stage RK method \eqref{eq:RK3} with symmetrizer $\calHN=\HN$ and $\CFL_3=1$, thus providing the first example of a RK method which is stable uniformly for arbitrarily large system of ODEs. It was later extended to all  $s$-stage RK methods of  order  $s=3[\textnormal{mod}4]$, \cite{SS2019}.
What about the other $s$-stage RK methods --- does this argument of  strong stability can be extended using proper symmetrizers, $\calHN$,  for arbitrary $s$?  In \cite{Tad2002} we conjectured that the 4-stage \eqref{eq:RK4} fails  strong stability in the sense  that it is not uniformly similar to a contraction, or equivalently ---  as outlined in section \ref{sec:sstability-not-enough} below, that there exist no symmetrizer $\calHN:=\calTN^*\calTN$ such that  \eqref{eq:contract}${}_{s=4}$ holds.
This was confirmed in \cite[Proposition 1.1]{SS2017} and was later extended in \cite[Theorem 2]{AAJ2023}, where it was shown that strong stability  \emph{fails} for  all $s$-stage RK methods with $s\in4\bbN$. 

\smallskip\noindent
{\bf The stability question for RK schemes}. We come out from the above discussion, lacking a definitive answer to the  question of stability of RK schemes/methods for arbitrarily large systems of ODEs. Thus, for example, the stability question for the widely used RK4  remains open. At this stage, the three different approaches --- spectral analysis, resolvent condition and strong stability  failed  to  determine whether   RK4  for example, is stable uniformly in $N$. We  therefore raise the question:

\begin{quote}
 Are the Runge-Kutta methods \eqref{eq:RKs},\eqref{eq:stable-RK-method} stable for arbitrarily large systems? 
 \end{quote}
 
 \noindent
The title of the paper is an affirmative answer to this question. The answer is  given in section \ref{sec:spectral-set} in terms of the numerical range of $\LN$.

%%%%%%%%%%%%%%%%%%%%%%%%%%%%%%%%%%%%%
\section{Spectral, resolvent and strong stability analysis are not enough}
In this section, we further elaborate with specific counterexamples,  on the failure of spectral analysis, resolvent condition and strong stability to capture the uniform-in-$N$ stability of general $s$-stage RK schemes/methods. Spectral and resolvent analysis are shown to be too weak to secure stability, while strong stability argument is too restrictive.

\subsection{Spectral analysis is not enough}\label{sec:spectral-not-enough}
We recall the spectral analysis led to the necessary stability condition   \eqref{eq:absolute-stability} 
\[
\Delta t\cdot\lambda_k(\LN) \subset \sA_s, \qquad  k=1,2,,\ldots,N. 
\]
As noted above, this  spectral  condition is not sufficient to secure stability in case of ill-conditioned eigensystems, $\|\TN^{-1}\|\cdot\|\TN\|$, which grows with $N$. An alternative approach, trying to circumvent this difficulty  of ill-condtioning is to use a unitary triangulation 
\[
\Ps(\DLN)=\UN^*\big(\Lambda_{{}_{\calP}} + \RN\big)\UN, \qquad \Lambda_{{}_{\calP}}:= \Ps(\Delta t\Lambda),
\]
 where $\Lambda$ and $\Lambda_{{}_{\calP}}$ are the diagonals made of the eigenvalues of  $\LN$ and, respectively, $\Ps(\DLN)$,  and $\RN$ is a nilpotent  upper triangular matrix, $(\RN)_{ij}=0, \ j\leq i$. Since  $\|\Ps^n(\DLN)\|= \|(\Lambda_{{}_{\calP}} + \RN)^n\|$,
 it remains to study the power-boundedness of the triangular matrix $\Lambda_{{}_{\calP}}+\RN$.
 But we claim that even  a most  favorable scenario, in which the spectral stability analysis \eqref{eq:absolute-stability} secures the eignevalues   \emph{strictly} inside the unit disc, 
 \be\label{eq:favorable}
 \theta:=\mathop{\max}_{1\leq k\leq N}|\Ps\big(\Delta t \lambda_k(\LN)\big)|<1,
 \ee
  will not suffice to guarantee the stability of RK method. 
 Indeed,  we may assume without restriction that $\RN$ is arbitrarily small by  its further re-scaling\footnote{$\|\cdot\|_F$ refers to Frobenius norm,
 $\|A\|^2_F=\textnormal{trace}(A^\top A)$}, so that
 \[
 \|S_\delta \RN S^{-1}_\delta\|=\|\{\bbR_{ij}\delta_\epsilon^{i-j}\}_{j>i}\|\leq \epsilon, \qquad S_\delta=\left[\begin{array}{cccc}
\delta_\epsilon& 0 & \ldots & 0\\
0 & \delta_\epsilon^2 &\ddots & \vdots\\
\vdots & \ddots & \ddots & \vdots\\
0 & \ldots & \ldots & \delta_\epsilon^N
\end{array}
\right], \quad \delta_\epsilon:=\frac{\epsilon}{\|\RN\|_F}.
 \]
 Here, an arbitrary $\epsilon>0$ is at our disposal to be determined below. 
 It follows that
 \[
 \|\Ps^n(\DLN)\|= \|\UN^*S^{-1}_\delta\big(\Lambda_{{}_{\calP}} + S_\delta\RN S^{-1}_\delta\big)^n S_\delta\UN\|\leq \|S^{-1}_\delta\|\times (\|\Lambda_{{}_{\calP}}\|+\epsilon)^n \times \|S_\delta\|.
 \]
 By assumption, $\|\Lambda_{{}_{\calP}}\|=\theta<1$. Set $\epsilon:=\nicefrac{1}{2}(1-\theta)$, we then end up with the stability bound
 \be\label{eq:atneqN}
  \|\Ps^n(\DLN)\| \leq \delta^{1-N}_\epsilon  \left(\frac{1+\theta}{2}\right)^n = \Big(\frac{2\|\RN\|_F}{1-\theta}\Big)^{N-1}\Big(\frac{1+\theta}{2}\Big)^n.
 \ee
 This bound secures the stability of finite dimensional systems -- in fact, it recovers the well-known fact that matrices of finite \emph{fixed} dimension with eigenvalues strictly inside the unit disc have exponentially decreasing iterates. But the argument breaks down when we examine the dependence  on $N$, since the bound \eqref{eq:atneqN} is \emph{not} uniform in $N$: for $n=N-1$, for example,  we find that unless $\RN$ is sufficiently small\footnote{To avoid an exponential growth of the upper-bound in 
 \eqref{eq:atneqN} requires $\|\RN\|_F \leq \frac{1-\theta}{1+\theta}$; a more delicate tuning of the scaling parameter $\delta_\epsilon$ shows that uniform bound is achieved for $\|\RN\|_F < 1-\theta$.}, then there is an \emph{exponential growth} in $N$,
 \be\label{eq:grow-with-N}
 \|\Ps^n(\DLN)\| \leq \Big(\frac{2\|\RN\|_F}{1-\theta}\Big)^{N-1}\Big(\frac{1+\theta}{2}\Big)^n_{\big| n=N-1} = \left(\|\RN\|_F\frac{1+\theta}{1-\theta}\right)^{N-1}.
 \ee
 This bound is sharp in the sense that the  power-growth hinted on the right of \eqref{eq:grow-with-N} is realized by the powers of the increasingly large $N\times N$ Jordan blocks
 \be\label{eq:Jordan}
 \|\bbJ^n_\parm\| \sim \left(\frac{2}{1-\parm}\right)^N \left(\frac{1+\parm}{2}\right)^n, \qquad \bbJ_\parm:=\left[\begin{array}{ccccc}
 -\parm & 1+\parm & \ldots & \ldots & 0\\
0 & -\parm & 1+\parm & \ddots & \vdots\\
\vdots & \ddots & \ddots &  \ddots & \vdots\\
0 & \ldots & \ddots &  -\parm & 1+\parm\\
0 & \ldots & \ldots & \ldots & -\parm
 \end{array}\right].
 \ee
 Although  $|\lambda_k(\bbJ_\parm)|<1$ for $-1<\parm<1$, there is a nonuniform  
 growth of $\|\bbJ^n_\parm\|$ with $0<\parm<1$, corresponding to $\parm=\theta$ in \eqref{eq:grow-with-N}, when $n\sim N \uparrow \infty$. These increasingly large Jordan blocks realize the extreme case of ill-conditioning warned in \eqref{eq:CCbound}. 
 
\subsubsection{Instability of forward Euler scheme}\label{sec:unstable-FE}The extremal example  \eqref{eq:Jordan} is not just of academic interest.  The following classical example,   \cite[\S6.6]{RM1967},\cite[\S3]{KW1993},\cite[\S5.1]{Tad2002} sheds light on what can go wrong with spectral analysis. Consider the transport equation with fixed speed $a>0$
\be\label{eq:yt=ayx}
\left\{\ \begin{split}
y_t(x,t)&=ay_x(x,t), \quad (t,x)\in \bbR_+\times (0,1)\\
y(1,t) & =0.
\end{split}\right.
\ee
Its spatial part is discretized using one-sided spatial differences on equi-spaced grid, $\{x_\nu:=\nu \Delta x\}_{\nu=0}^N, \, \Delta x=\nicefrac{1}{N}$, covering the interval $[0,1]$,
\be\label{eq:one-sided-semi-discrete}
\left\{\begin{split}
\ddt y(x_\nu,t)&=  a\, \frac{y(x_{\nu+1},t)-y(x_\nu,t)}{\Delta x}, \qquad  \nu=0, 1, \ldots, N-1, \\
\quad y(x_N,t)&=0.
\end{split}\right.
\ee
This amounts to method of lines  for the $N$-vector of unknowns, $\by(t):=\big(y(x_0,t),\ldots, y(x_{N-1},t)\big)^\top$, governed by the $N\times N$ semi-discrete system in terms of the forward-difference operator $\DpN$,
\be\label{eq:transport}
\dot{\by}(t)=a\DpN\by, \qquad \DpN:=\frac{1}{\Delta x}\left[\begin{array}{ccccc}
 -1& 1 & \ldots & \ldots & 0\\
0 & -1 & 1 & \ddots & \vdots\\
\vdots & \ddots & \ddots &  \ddots & \vdots\\
0 & \ldots & \ddots &  -1 & 1\\
0 & \ldots & \ldots & \ldots & -1
 \end{array}\right].
\ee
Observe that $\DpN$ is semi-bounded --- in fact it is \emph{strictly dissipative} in the sense that
\[
(\DpN)^\top+\DpN \leq -2\Big(1-\cos\big(\frac{\pi}{N+1}\big)\Big)\bbI_{N\times N}.
\] 
This system \eqref{eq:transport} is integrated using one-stage Forward Euler method, \eqref{eq:FE}, augmented with boundary condition $u(x_N,t)=0$,
\be\label{eq:FE-Jordan}
\bu_{n+1}=\calP_1(\DDpN)\bu_n,\quad \bu_n:=\big(u(x_0,t^n),\ldots, u(x_{N-1},t^n)\big)^\top, \qquad n=0,1,2,\ldots,
\ee
which encodes  the fully discrete finite difference scheme
\be\label{eq:one-sided}
\left\{\begin{split}
 \frac{u(x_\nu,t^{n+1})-u(x_\nu,t^n)}{\Delta t}&=  a \frac{u(x_{\nu+1},t^n)-u(x_\nu,t^n)}{\Delta x}, \qquad  \nu=0, 1, \ldots, N-1,\\
  u(x_N,t^{n+1})&=0.
  \end{split}\right.
\ee
The computation proceeds  with hyperbolic scaling of fixed mesh ratio, $\Delta t/\Delta x$. This is precisely  the regime $N\sim n$ indicated in \eqref{eq:grow-with-N}, in which case it is known that the forward Euler scheme \eqref{eq:one-sided}  is \emph{unstable}, if it violates the CFL condition  $0<{a\Delta t}/{\Delta x} <1$. Observe that $\calP_1(\DDpN)$ amounts to a Jordan block,
\[
\calP_1(\DDpN)={\mathbb I}+\DDpN=\bbJ_\parm, \quad \parm=a\mesh-1, \quad \mesh:=\frac{\Delta t}{\Delta x}.
\]
 Therefore,  the instability of  $\bbJ_\parm$ with $\parm\in (0,1]$ follows, corresponding to  $1<a\mesh <2$, which was already claimed by the bound \eqref{eq:Jordan}. In particular, the RK1  scheme \eqref{eq:FE-Jordan} is unstable, despite having $|\lambda_k(\calP_1(\DDpN)|= |\parm| < 1$.

Now  consider integration of \eqref{eq:transport} using 4-stage \eqref{eq:RK4}. Spectral stability analysis
\[
|\lambda_k(\calP_4(\DDpN)|=|\calP_4(-a\mesh)|\leq 1, 
\]
leads to the  CFL condition, $ 0< a\mesh \leq R_4=2\sqrt{2}$,  which \emph{fails} to guarantee stability, since it does not take capture the power-growth of the increasingly large Jordan block $a\mesh\DpN$. We conclude that even in the most favorable scenario \eqref{eq:favorable}, spectral analysis  is not enough to secure a uniform-in-$N$ stability of RK methods  for increasingly large systems.

\subsection{Resolvent stability is not enough}\label{sec:resolvent-not-enough} 
Recall that the  imaginary interval condition \eqref{eq:along-iaxis} is necessary for the stability of RK method.  Kreiss and Wu \cite[Theorem 3.6]{KW1993} proved that the converse holds in the sense of \emph{resolvent stability}. Here,  resolvent stability  is interpreted in  the sense that  there exists a constant $K_R>0$ independent of $N$, such that  for all negative $\LN$'s, if the time step is small enough, $\Delta t\cdot\|\LN\|\leq \CFL_s$,  then the corresponding $s$-stage RK method satisfies 
\be\label{eq:resolvent-stable}
\|\big(z{\mathbb I}-\Ps(\DLN)\big)^{-1}\|\leq \frac{K_R}{|z|-1}, \qquad \forall |z|>1.
\ee
The size of the time step is dictated by region of absolute stability, $\sA_s$, specifically ---  $\CFL_s\leq R_s$ is the radius of largest half disc inscribed inside $\sA_s$, 
\[
B^-_{\CFL_s}(0):=\large\{z :\, \Real z <0, \ |z|<\CFL_s\large\} \subset \sA_s, \qquad   
 \sA_s=\big\{z\in{\mathbb C} : \ |\Ps(z)|\leq 1\big\}.
 \]
The notion of stability in the sense of power-boundedness, \eqref{eq:power-boundedness}, implies that the resolvent condition holds with $K_R=\const_{\bbL}$.
The Kreiss Matrix Theorem, \cite{Kre1962},\cite[\S4.9]{RM1967}, states that the converse holds for a family of matrices with a \emph{fixed} dimension. 
Yet this does not enable us to conclude the uniform-in-$N$ power-boundedness stability of RK method sought in \eqref{eq:stable-RK-method}, since the resolvent bound \eqref{eq:resolvent-stable} may still allow growth \mbox{$\|\Ps^n(\DLN)\|\lesssim N\const_R$}.
In \cite{Tad1981} we conjectured that this  linear dependence on $N$  is the best possible. This was confirmed  in \cite{LT1984} proving that
\[
\sup_{A\in M_N(\bbC)}\frac{\sup_{|z|>1} (|z|-1)\|(z\bbI-A)^{-1}\|}{\sup_{n\geq 1} \|A^n\|} \sim eN.
\]
The above linear-growth-in-$N$ behavior was exhibited by a sequence of increasingly large $N\times N$ Jordan blocks, $A_N=N\bbJ_0$. We observe that the $A_N$'s in this case are not resolvent bounded uniformly in $N$ ; it is   only the ratio on the left that exhibits the sharp linear bound in $N$.  
A concrete example of a  family of matrices in $M_N(\bbC)$ which are resolvent stable yet their powers  admit logarithmic growth in $N$ was constructed in \cite{MS1965}. 

 \begin{remark}[{\bf Dissipative resolvent condition}] In \cite{Tad1986} we considered a stronger  resolvent condition of the form
\be\label{eq:resolventD}
\|\big(z\bbI- \Ps(\DLN)\big)^{-1}\|\leq \frac{\const_R}{|z-1|}, \quad \forall \{z\,:\, |z|\geq 1, z\neq 1\}.
\ee
In \cite{Rit1953} it was  proved that \eqref{eq:resolventD} implies $n^{-1}\|\Ps^n\|\stackrel{n\rightarrow \infty}{\longrightarrow}0$. In \cite{Tad1986} we stated the improved logarithmic bound $\|\Ps^n(\DLN)\|\lesssim \log(n)$; this  was proved in \cite{Vit2004a}. More on the dissipative resolvent \eqref{eq:resolventD} and related notions of stability  can be found in \cite{Vit2004b,Vit2005,Sch2016}.   The dissipative resolvent bound \eqref{eq:resolventD} reflects a flavour of coercivity condition which will be visited in section \ref{sec:coercivity} below;  however,  it does \underline{not} secure uniform-in-$N$ power-boundedness. A more precise notion of  a dissipative resolvent condition of order $2r>0$ requires the existence of $\eta_r>0$ such that
\be\label{eq:resolventDs}
\|(z\bbI- \Ps(\DLN))^{-1}\|\leq \frac{\const_R}{\textnormal{dist}\{z, \Omega_r\}}, \quad \forall z\notin \Omega_r:=\{w \, : \, |w|+\eta_r|w-1|^{2r}\leq 1\}.
\ee
 The resolvent bound \eqref{eq:resolventDs} reflects the classical notion of  ``dissipativity of order $2r$'' due to Kreiss \cite{Kre1964}. It remains an open question whether \eqref{eq:resolventDs} implies uniform-in-$N$ power-boundedness.
\end{remark} 
\subsection{Strong stability is not enough}\label{sec:sstability-not-enough}
The contractivity  stated in \eqref{eq:contract}, \mbox{$\|\calTN\Ps(\DLN)\calTN^{-1}\|\leq 1$} with uniformly bounded  $\|\calTN^{-1}\|\cdot\|\calTN\|\leq \const_{\calT}$, is equivalent to strong stability in the sense that there exist
 uniformly positive definite symmetrizer $\calHN$ and  $\const_\calH>0$, such that
\be\label{eq:strongstability}
\Ps^\top(\DLN)\calHN\Ps(\DLN) \leq \calHN, \qquad 0<\frac{1}{\const_\calH} \leq \calHN \leq \const_\calH.
\ee
Just set $\calHN=\calTN^*\calTN$ with uniformly bounded $\const_\calH=\const_\calT$. In other words, \eqref{eq:strongstability} tells us that\footnote{We let $|\cdot|_\calH$ denote the weighted norm, $|\bw|^2_\calH=\langle \bw,\calH\bw\rangle$, and $\|\cdot\|_\calH$ denote the corresponding induced matrix norm, $\|\calP\|_\calH:=\max_{\bw\neq 0}|\calP\bw|_\calH/|\bw|_\calH$.}
\be\label{eq:ssPs}
\|\Ps(\DLN)\|_{\calHN}\leq 1, \qquad \Delta t\cdot\|\LN\|\leq \CFL_s.
\ee
This coincides with the usual notion of strong stability,\footnote{also called monotonicity in the literature on Runge-Kutta methods.} e.g., \cite{Tad2002,Ran2021}.
It follows that  a strongly stable RK scheme, $\bu_{n+1}=\Ps(\DLN)\bu_n$, satisfies 
\[
|\bu_{n+1}|_{\calHN} = |\Ps(\DLN)\bu_n|_{\calHN} \leq |\bu_n|_{\calHN} \leq \ldots \leq |\bu_0|_{\calHN},
\]
 and hence the RK iterations satisfy the uniform-in-$N$ stability bound,
$\displaystyle |\bu(t_n)|_{{}_{\ell^2}} \leq \const_\calH|\bu_0|_{{}_{\ell^2}}$.
The strong stability of   the 3-stage RK method \eqref{eq:RK3} with symmetrizer $\calHN=\HN$ and $\CFL_3=1$, was proved in \cite{Tad2002} and  was later extended in \cite[Theorem 4.2]{SS2019} to all  $s$-stage RK methods of  order  $s=3[\textnormal{mod}4]$, namely --- for small enough time step, $\Delta t\cdot\|\LN\|\leq \CFL_s$, there holds, 
\be\label{eq:SSPs}
\|\Ps(\DLN)\|_{\HN} \leq 1, \qquad \Ps(z)=\sum_{k=0}^s\frac{z^k}{k!}, \quad s=3[\textnormal{mod}4].
\ee
As mentioned above, this line of arguing stability by construction of the strong stability symmetrizer,  fails to extend to $s$-stage RK methods with $s\in 4\bbN$, \cite{SS2017,RO2018,AAJ2023}. But this does not mean that the latter RK methods are necessarily unstable. 
Indeed, the  general question whether  stable methods are necessarily strongly stable was addressed in \cite{Fog1964} --- they are not.  It  leaves  open the possibility that the question  stability can be pursued by other approaches --- other than strong stability. This  will be addressed in the next section. 

%%%%%%%%%%%%%%%%%%%%%%%%%%%%%%%%
\section{Numerical range and stability of coercive Runge-Kutta schemes}\label{sec:range}
 %%%%%%%%%%%%%%%%
\subsection{Numerical range}\label{sec:numerical-range}
 We let $\ell^2_H(\bbC^N)$ denote the weighted Euclidean space associated with a given  positive definite  matrix $H>0$, and equipped with
\[
\langle \bx,\by\rangle_H:=  \bx^*H\by, \quad |\bx|_H^2:=\langle\bx,H\bx\rangle, \quad H>0.
\]
Let $A\in M_N(\bbC)$ be an $N\times N$ matrix with possibly complex-valued entries.
The $H$-weighted \emph{numerical range}, $W_H(A)$, is the  set in the complex plane
\[
W_H(A):=\big\{\langle A\bx,\bx\rangle_H\ : \ \bx\in \bbC^N, \ |\bx|_H=1\big\}.
\]
In the  case of the standard Euclidean framework corresponding to $H=\bbI$,  we drop the subscript $\bbH=\bbI$  and remain with the usual $|\cdot|^2_{{\ell^2}} =\langle\cdot,\cdot\rangle$, and the corresponding numerical range denoted $W(A)$.
If $A$ is real symmetric then $W(A)$ is an interval  on the real line (and conversely --- if $W(A)$ is a real interval then $A$ is symmetric, \cite[Problem 3.9]{Kat1995}); if $A$ is skew-symmetric then $W(A)$ is an interval on the imaginary line. 
For general $A$'s, the Hausdorff-Toeplitz theorem asserts that $W(A)$ is convex set in ${\mathbb C}$. As an example, we compute the numerical range of the $N\times N$ translation matrix, $J_0$, 
\be\label{eq:J0}
\bbJ_0:= \left[\begin{array}{ccccc}
0& 1 & \ldots & \ldots & 0\\
0 & 0 & 1 & \ddots & \vdots\\
\vdots & \ddots & \ddots &  \ddots & \vdots\\
0 & \ldots & \ddots &  0 & 1\\
0 & \ldots & \ldots & \ldots & 0\end{array}\right]_{N\times N}.
\ee
 For any unit vector 
$\bx=(x_1,x_2,\ldots,x_N)^\top$ we set a new unit vector 
$x_j(\xi):=e^{ij\xi}x_j$ to find
\[
\langle \bbJ_0\bx(\xi),\bx(\xi)\rangle = \sum_{j=1}^{N-1}x_{j+1}(\xi)\overline{x_{j}(\xi)}=e^{i\xi}\langle \bbJ_0\bx,\bx\rangle, \qquad x_j(\xi):=e^{ij\xi}x_j,
\]
which proves  that $W(\bbJ_0)$ is a disc centered at the origin, $B_{\rho}(0)$; its radius, $\rho=\rho_{{}_N}$, is found by considering the eigenvalues   $\lambda_k(\Real\bbJ_0)= \cos(\frac{k\pi}{N+1}), \, k=1,2,\ldots, N$: since for any $A$, ${\Real}W(A)=W({\Real}A)$, we find 
$\rho_{{}_N}=\lambda_1({\Real}\bbJ_0)=\cos(\frac{\pi}{N+1})$, and we conclude that $W(\bbJ)$ is the disc $B_{\rho_{{}_N}}(0)$,
\be\label{eq:W-of-J}
W(\bbJ_0)=\{z \, : \, |z|\leq \rho_{{}_N}\}, \qquad \rho_{{}_N}=\cos\big(\frac{\pi}{N+1}\big).
\ee

\subsection{The nuemrical radius} 
The numerical radius of $A\in M_N(\bbC)$ is given by 
\[
r_{{}_{\!H}}(A):=\max_{|\bx|_H=1}|\langle A\bx,\bx\rangle_H|.
\]
The role of the numerical radius in addressing the question of stability was pioneered in the celebrated work of Lax \& Wendroff, \cite{LW1964}, in which they proved the stability of their 2D Lax-Wendroff scheme, i.e., power-boundedness of a family amplification matrices, $\|G^n\|\leq Const.$,  by securing  $r(G)\leq 1$. The original proof, by induction on $N$ (!), was later replaced by Halmos inequality, \cite{Hal1967},\cite{Pea1966}
\be\label{eq:Halmos}
r(G^n) \leq r^n(G).
\ee
Note  that although the numerical radius is not sub-multiplicative, that is --- although  $r(AB) \leq r(A)r(B)$   may fail for general $A,B\in M_N({\mathbb R})$, \cite{GT1982},  Halmos' inequality   states that  it holds whenever $A=B$.\newline
Since for all $A$'s there holds $\|A\|\leq 2r(A)$, \eqref{eq:Halmos} immediately yields the stability asserted by  Lax \& Wendroff
\be\label{eq:LW-stability}
r(G)\leq 1 \ \leadsto \ \|G^n\|\leq 2,
\ee
 and more important for our purpose --- power-boundedness is secured uniformly in $N$. 
It is straightforward to extend these arguments to the weighted framework, \cite[\S3]{Tad1981}
\be\label{eq:weighted-Halmos}
r_{{}_{\bbH}}(G^n) \leq r_{{}_{\bbH}}^n(G), \ \textnormal{and therefore} \ r_{{}_{\bbH}}(G)\leq 1 \ \leadsto \ \|G^n\|\leq 2\const_\bbH, \quad 0< \const^{-1}_{\bbH}\leq \bbH\leq \const_\bbH.
\ee
\begin{remark} H.-O. Kreiss proved the LW stability \eqref{eq:LW-stability} by linking it to a (strict) resolvent condition
\[
r(A) \leq 1 \ \leadsto    \|(z\bbI- A)^{-1}\|\leq \frac{1}{|z|-1}, \qquad \forall |z|>1
\]
and conversely, \cite{Spi1993}, the numerical range  is the smallest set $S=W(A)$, which induces the strict resolvent condition 
\[
\|(z\bbI-A)^{-1}\| \leq \frac{1}{\textnormal{dist}(z, S)}, \quad \forall z\in S^{c}.
\]
\end{remark}

\subsection{Coercivity and RK stability}\label{sec:coercivity} We turn to verify the stability of the 1-stage  forward Euler scheme \eqref{eq:FE}, 
\[
\bu_{n+1}=(\bbI+\DLN)\bu_n.
\]
There are two regions of interest in the complex plane that we need to consider: the weighted numerical range, $W_{\HN}\!(\LN)$, and the region of absolute stability associated with forward Euler, $\sA_1=\{z\, :\, |1+z|\leq 1\}$. We make the assumption  that the time step $\Delta t$ is small enough so that 
\be\label{eq:CFL-of-FE}
\Delta t\,W_{\HN}\!(\LN) \subset \sA_1, \qquad \sA_1=\{z :\, |1+z|\leq 1\},
\ee
 then   
 \be\label{eq:stability-of-RK1}
 r_{{}_{\HN}}\!\!(\calP_1(\DLN)) =\max_{|\bx|_{\HN}=1}|1+\langle \DLN\bx,\bx\rangle_{\HN}|=
 \max_{z\in \Delta t W_{\HN}\!(\LN)}|1+ z| \leq  \max_{z\in \sA_1}|\calP_1(z)| =1.
 \ee
  We summarize by stating the following. 
  \begin{theorem}[{\bf Numerical range stability of} \ref{eq:FE}]\label{thm:coercive-FE}
  Consider the forward Euler scheme associated with  1-stage forward Euler method \eqref{eq:FE},
 \[
 \bu_{n+1}=(\bbI+\DLN)\bu_n, \quad n=0,1,2,\ldots,
 \]
 with assume the CFL condition \eqref{eq:CFL-of-FE} holds. Then the scheme is stable, and the following stability bound holds 
 \[
 |\bu_n|_{{}_{\ell^2}}\leq 2\const_{\bbH}|\bu_0|_{{}_{\ell^2}}, \qquad \forall n\geq 1.
 \]
\end{theorem}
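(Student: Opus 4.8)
The plan is to recognize that the entire argument reduces to a single application of the weighted Halmos inequality \eqref{eq:weighted-Halmos} to the amplification matrix $G := \calP_1(\DLN) = \bbI + \DLN$, once its weighted numerical radius has been controlled. First I would observe that, because the forward Euler polynomial $\calP_1(z) = 1+z$ is \emph{affine}, the quadratic form $\bx \mapsto \langle G\bx, \bx\rangle_{\HN}$ factors through the numerical range of $\DLN$: for any unit vector $|\bx|_{\HN}=1$ one has $\langle G\bx,\bx\rangle_{\HN} = 1 + \langle \DLN\bx,\bx\rangle_{\HN}$, so the values of this form range exactly over $1 + \Delta t\, W_{\HN}(\LN)$. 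This is the crucial structural feature special to the one-stage method, and it is what makes the numerical range --- rather than merely the spectrum --- the governing object.

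Next I would invoke the CFL hypothesis \eqref{eq:CFL-of-FE}, namely $\Delta t\, W_{\HN}(\LN)\subset\sA_1$, which says precisely that every value $z = \Delta t\langle \LN\bx,\bx\rangle_{\HN}$ lies in the region of absolute stability $\sA_1=\{z:|1+z|\le 1\}$. Taking the supremum of $|1+z|$ over $z$ in this set then yields $r_{{}_{\HN}}(G)\le \max_{z\in\sA_1}|\calP_1(z)| = 1$. This is exactly the chain recorded in \eqref{eq:stability-of-RK1}; the work here is only to verify that the first two steps are genuine identities (by the affine factorization) and that the CFL inclusion justifies the final inequality.

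Having established $r_{{}_{\HN}}(G)\le 1$, power-boundedness follows at once from the weighted Halmos inequality \eqref{eq:weighted-Halmos}, whose stated implication $r_{{}_{\HN}}(G)\le 1 \leadsto \|G^n\|\le 2\const_{\bbH}$ --- valid uniformly in $N$ since $0<\const_{\bbH}^{-1}\le\HN\le\const_{\bbH}$ --- gives $\|\calP_1^n(\DLN)\|\le 2\const_{\bbH}$. I would then conclude the asserted bound by applying this to the iterates $\bu_n = \calP_1^n(\DLN)\bu_0$: indeed $|\bu_n|_{\ell^2}\le \|\calP_1^n(\DLN)\|\,|\bu_0|_{\ell^2}\le 2\const_{\bbH}|\bu_0|_{\ell^2}$, which is the claim.

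I do not expect a genuine obstacle in this one-stage case: the proof is essentially the display \eqref{eq:stability-of-RK1} followed by Halmos, and everything is uniform in $N$ by construction. The only point deserving care is the affine factorization in the first step, and it is worth flagging precisely \emph{because} it will fail for higher-degree $\calP_s$, where $\langle \calP_s(\DLN)\bx,\bx\rangle_{\HN}\neq \calP_s(\langle\DLN\bx,\bx\rangle_{\HN})$ since $\bx$ need not be an eigenvector. Consequently the clean identification $r_{{}_{\HN}}(\calP_s(\DLN)) = \max_{z\in\Delta t W_{\HN}(\LN)}|\calP_s(z)|$ breaks down for $s\ge 2$; anticipating this gap is what motivates the spectral-set machinery invoked later for the general $s$-stage schemes.
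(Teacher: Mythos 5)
Your proposal is correct and is essentially identical to the paper's own argument: the display \eqref{eq:stability-of-RK1} is precisely your affine factorization plus the CFL inclusion giving $r_{{}_{\HN}}\!(\bbI+\DLN)\leq 1$, and the paper then concludes via the weighted Halmos inequality \eqref{eq:weighted-Halmos} exactly as you do. Your closing remark about why the argument breaks for $s\geq 2$ also matches the paper's motivation for the spectral-set machinery of section \ref{sec:spectral-set}.
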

  \begin{example}\label{exm:one-sided} As an example for theorem \ref{thm:coercive-FE}
  we consider the one-sided differences  \eqref{eq:transport},
\be\label{eq:one-sided-revisited}
\DDpN=a\frac{\Delta t}{\Delta x}\left[\begin{array}{ccccc}
 -1& 1 & \ldots & \ldots & 0\\
0 & -1 & 1 & \ddots & \vdots\\
\vdots & \ddots & \ddots &  \ddots & \vdots\\
0 & \ldots & \ddots &  -1 & 1\\
0 & \ldots & \ldots & \ldots & -1
 \end{array}\right] = a\mesh\big(-{\mathbb I} +\bbJ_0\big),  \quad a>0, \quad\mesh=\frac{\Delta t}{\Delta x}.
\ee
 By translation and dilation, $W(\DDpN)=a\mesh\big(-1\oplus W(\bbJ_0)\big)$, where  \eqref{eq:W-of-J} tells us that  $W(\bbJ_0)$ is the ball $B_{\rho_N}(0)$. Hence $W(\DDpN)$ is given by the shifted ball,
\be\label{eq:W-of-Jq}
W(\DDpN)=\Big\{z \, : \, |z+a\mesh|\leq a \mesh\rho_{{}_N}\Big\}, \qquad \mesh=\frac{\Delta t}{\Delta x}, \quad \rho_{{}_N}=\cos\big(\frac{\pi}{N+1}\big).
\ee
In particular, $W(\DDpN) \subset B_1(-1)$ uniformly in $N$ if and only if the CFL condition $a\mesh\leq 1$ holds, which in turn secures the stability of the 1-stage forward Euler  method, \eqref{eq:FE}, for one-sided the transport equation\eqref{eq:transport}, $\bu_{n+1}=(\bbI+\DDpN)\bu_n$.
\end{example}
  \begin{corollary}[{\bf Stability of forward Euler scheme}]\label{cor:FE}
 Consider the forward Euler scheme \eqref{eq:one-sided} associated with 1-stage RK method \eqref{eq:FE},
 \[
 \bu_{n+1}=\calP_1(\DDpN)\bu_n, \quad n=0,1,2,\ldots.
 \]
 The scheme is stable under the CFL condition, $0<a\mesh\leq 1$, and the following stability bound  holds 
 $ |\bu_n|_{{}_{\ell^2}}\leq 2|\bu_0|_{{}_{\ell^2}}, \ \forall n\geq 1$.
  \end{corollary}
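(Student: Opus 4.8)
The plan is to recognize the corollary as the specialization of \thm{thm:coercive-FE} to the one-sided transport operator, so that no new machinery is needed beyond combining that theorem with the numerical range computation of \exam{exm:one-sided}. I would take $\LN=a\DpN$ together with the trivial symmetrizer $\HN=\bbI$ (so that $\const_{\bbH}=1$), and the only task is to verify that the CFL hypothesis \eqref{eq:CFL-of-FE} of \thm{thm:coercive-FE}, namely $\Delta t\, W(a\DpN)\subset \sA_1$, holds precisely under $0<a\mesh\leq 1$. Note this choice of symmetrizer is consistent with the strict dissipativity of $\DpN$ recorded earlier, so $a\DpN$ is indeed negative with respect to $\HN=\bbI$.

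First I would rewrite $\Delta t\cdot a\DpN=\DDpN=a\mesh(-\bbI+\bbJ_0)$, so that $\Delta t\, W(a\DpN)=W(\DDpN)$. By \exam{exm:one-sided}, see \eqref{eq:W-of-Jq}, this numerical range is the closed disc $W(\DDpN)=\{z:|z+a\mesh|\leq a\mesh\rho_{{}_N}\}$, centered at $-a\mesh$ with radius $a\mesh\rho_{{}_N}$, where $\rho_{{}_N}=\cos(\pi/(N+1))<1$.

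Next I would check the containment $W(\DDpN)\subset \sA_1=B_1(-1)$. Since one closed disc $B_{r_1}(c_1)$ lies inside another $B_{r_2}(c_2)$ iff $|c_1-c_2|+r_1\leq r_2$, the required inequality is $|1-a\mesh|+a\mesh\rho_{{}_N}\leq 1$. Under the CFL condition $0<a\mesh\leq 1$ one has $|1-a\mesh|=1-a\mesh$, and the inequality collapses to $a\mesh(\rho_{{}_N}-1)\leq 0$, which holds because $\rho_{{}_N}<1$ and $a\mesh>0$; crucially, it holds for every $N$ with the same set $\sA_1$. With the hypothesis \eqref{eq:CFL-of-FE} thus verified and $\const_{\bbH}=1$, \thm{thm:coercive-FE} delivers the asserted bound $|\bu_n|_{{}_{\ell^2}}\leq 2|\bu_0|_{{}_{\ell^2}}$ for all $n\geq 1$.

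I do not expect a genuine obstacle here, since all the analytic content --- the passage from the numerical-radius bound $r_{{}_{\HN}}(\calP_1(\DLN))\leq 1$ to power-boundedness via the weighted Halmos inequality \eqref{eq:weighted-Halmos} --- is already packaged inside \thm{thm:coercive-FE}, and the numerical range is already computed in \exam{exm:one-sided}. The only point deserving care is that the disc-in-disc containment must be uniform in $N$; this is exactly where the strict bound $\rho_{{}_N}<1$ helps, as it guarantees that the containment does not degrade as $N\uparrow\infty$ --- precisely the ill-conditioning failure mode warned against in \sect{sec:spectral-not-enough}.
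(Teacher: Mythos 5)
Your proposal is correct and follows exactly the paper's route: the corollary is precisely Theorem~\ref{thm:coercive-FE} specialized via the numerical range computation \eqref{eq:W-of-Jq} of Example~\ref{exm:one-sided}, with $\HN=\bbI$ so $\const_{\bbH}=1$. Your explicit disc-in-disc criterion $|c_1-c_2|+r_1\leq r_2$ merely spells out the containment $W(\DDpN)\subset B_1(-1)$ that the paper states in one line, and your observation that $\rho_{{}_N}<1$ makes this uniform in $N$ matches the paper's emphasis.
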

 The last corollary  can be recast in terms of a  stability statement for $\bbJ_\parm=\calP_1(\DDpN)$,
 \[
 \|\bbJ^n_\parm\|\leq 2, \qquad \parm \in (-1,0).
 \]
  This  complements the statement of instability of $\bbJ_\parm$ in the range $\parm\in (0,1]$, discussed in section \ref{sec:unstable-FE}.\newline
We note that  the stability of $\bbJ_\parm, \ \parm \in [-1,0)$ can be independently verified  by  its induced  $\ell^1$-norm , 
 \be\label{eq:stability-ell1}
 \|\bbJ_\parm\|_{\ell^1}= |-\parm|+|1+\parm|= 1 \ \leadsto \ 
\|\bbJ^n_\parm\|_{\ell^1} \leq 1, \qquad \parm \in [-1,0).
\ee
However, the $\ell^2$-stability $\|\bbJ_\parm\|_{\ell^1}\leq 2$ stated in corollary \ref{cor:FE} and the $\ell^1$-stability \eqref{eq:stability-ell1} are \emph{not} equivalent uniformly in $N$.
Also, $\bbJ_\parm$ is subject to  $\ell^2$   von-Neumann stability analysis,
\cite[\S4.7]{RM1967}
\[
\max_\varphi\left|-\parm+(1+\parm)e^{i\varphi}\right|=1, \qquad  \parm \in [-1,0).
\] 
However, since the underlying problem \eqref{eq:one-sided} is not periodic,
  von Neumann stability analysis may not suffice:  it requires the normal mode analysis \cite{Kre1968}
  %, or a rigorous stability argument of Teoplitz matrices --- see our discussion in section \ref{sec:IBVP} below, 
  to prove $\ell^2$-stability. 
Thus, the numerical range   argument summarized in corollary \ref{cor:FE}  offers a genuinely different approach  of addressing the question of stability, at least for 1-stage \ref{eq:FE}.

 \begin{remark}[{\bf Coercivity}]\label{rem:corecivity}
The CFL restriction  encoded in \eqref{eq:CFL-of-FE}, $|\langle \DLN\bx,\bx\rangle_{\HN}+1|\leq 1$, leads to the sub-class of  negative $\LN$\!\!'s   which  satisfy the \emph{coercivity bound}
   \be\label{eq:coercivity}
  2\textnormal{\Real}\langle\LN\bx,\bx\rangle_{\HN} \leq -\beta|\langle\LN\bx,\bx\rangle_{\HN}|^2, \qquad \forall \bx \in \{\bbC^N: \, |\bx|_{\HN}=1\}.
  \ee
  Indeed, if $\LN$ is $\beta$-coercive in the sense that  \eqref{eq:coercivity} holds with $\beta>0$,   then \eqref{eq:CFL-of-FE} is satisfied under the CFL condition $\Delta t\leq \beta$, and stability follows, $r_{{}_{\HN}}\!\!(\bbI+\DLN) \leq 1$.
   We note that \eqref{eq:coercivity} places a weaker coercivity condition than the  stronger notion of coercivity introduced in \cite{LT1998}
  \be\label{eq:coercivity-98}
 \LN^\top\HN+\HN\LN \leq -\beta\LN^\top\HN\LN, \qquad \beta>0.
  \ee
  Indeed, the latter   implies \eqref{eq:coercivity},  for
  \[
   2\textnormal{\Real}\langle\LN\bx,\bx\rangle_{\HN} \leq -\beta\langle \LN^\top\HN\LN\bx,\bx\rangle = -\beta|\LN\bx|^2_{\HN} \leq -\beta|\langle\LN\bx,\bx\rangle_{\HN}|^2, \qquad |\bx|_{\HN}=1.
  \]
  One can then revisit the coercivity-based examples for stable RK methods in \cite{LT1998} using the relaxed coercivity \eqref{eq:coercivity}. The notion of $\beta$-coercivity is related to the dissipative resolvent condition \eqref{eq:resolventD} but we shall not dwell on this point in this work.
   \end{remark}

 \subsection{Numerical range stability of SSP RK\hspace*{-0.02cm}$s$}\label{sec:numerical-RKs}
We extend theorem \ref{thm:coercive-FE}  to multi-stage RK methods using their Strong Stability Preserving (SSP)  format\cite[\S3]{GST2001}. We demonstrate the first three cases of RK\hspace*{-0.02cm}$s$, $s=2,3,4$.\newline
Assume that the numerical range stability \eqref{eq:stability-of-RK1} holds. For example, the CFL condition $\Delta t\leq \beta$ for $\beta$-coercive $\LN$\!'s, \eqref{eq:coercivity}, implies  $r_{{}_{\HN}}\!\!(\bbI+\DLN)\leq 1$. Then, for the 2-stage RK method, \eqref{eq:RK2}, we have by Halmos inequality \eqref{eq:Halmos}
\[
r_{{}_{\HN}}\!\!\big(\calP_2(\DLN)\big) \leq 
\nicefrac{1}{2}+\nicefrac{1}{2}\,r^2_{{}_{\HN}}\!\!(\bbI+\DLN)  \leq 
\nicefrac{1}{2}+\nicefrac{1}{2}=1, \quad \calP_2(\DLN) \equiv\nicefrac{1}{2}\,\bbI+\nicefrac{1}{2}\,(\bbI+\Delta t \LN)^2.
\]
Similarly, the  3-stage RK method \eqref{eq:RK3} can be expressed as 
\[
 \calP_3(\DLN) \equiv  \nicefrac{1}{3}\,\bbI+\nicefrac{1}{2}\,(\bbI+\DLN) + \nicefrac{1}{6}\,(\bbI+\DLN)^3,  
\]
and hence if \eqref{eq:stability-of-RK1} holds, then the stability of \eqref{eq:RK3} follows from Halmos inequality,
\[
r_{{}_{\HN}}\!\!\big(\calP_3(\DLN)\big) \leq 
\nicefrac{1}{3}+\nicefrac{1}{2}\,r_{{}_{\HN}}\!\!(\bbI+\DLN) + \nicefrac{1}{6}\,r^3_{{}_{\HN}}\!\!(\bbI+\DLN) \leq 
\nicefrac{1}{3}+\nicefrac{1}{2}+\nicefrac{1}{6}=1.
\]
A similar argument applies to the 4-stage RK \eqref{eq:RK4}, 
\[
 \calP_4(\DLN) \equiv  \nicefrac{3}{8}\,\bbI+\nicefrac{1}{3}\,(\bbI+\DLN) + \nicefrac{1}{4}\,(\bbI+\DLN)^2 
    + \nicefrac{1}{24}\,(\bbI+\DLN)^4; 
\]
the numerical stability \eqref{eq:stability-of-RK1},
  $r_{{}_{\HN}}\!\!(\bbI+\Delta t \LN)\leq 1$  
implies the  stability of RK4, 
\[
\begin{split}
r_{{}_{\HN}}\!\!\big(\calP_4(\Delta t\LN)\big) &\leq 
 \nicefrac{3}{8}+\nicefrac{1}{3}\,r_{{}_{\HN}}\!\!(\bbI+\DLN) + \nicefrac{1}{4}\,r^2_{{}_{\HN}}\!\!(\bbI+\DLN)^2
    + \nicefrac{1}{24}\,r^4_{{}_{\HN}}\!\!(\bbI+\DLN)\\ 
& \leq \nicefrac{3}{8}+\nicefrac{1}{3}+\nicefrac{1}{4}+\nicefrac{1}{24}=1.
\end{split}
\]
We summarize by stating
\begin{corollary}[{\bf Coercivity implies stability of RK\hspace*{-0.02cm}$s$}, $s=2,3,4$]\label{cor:SSP}
Consider the RK schemes 
\[
\bu_{n+1}=\Ps(\DLN)\bu_n, \ n=0,1,2,\ldots, \quad  s=2,3,4.
\]
 Assume the numerical range stability \eqref{eq:stability-of-RK1} holds. In particular if $\LN$ is $\beta$-coercive in the sense of  \eqref{eq:coercivity}, and that the CFL condition, $\Delta t \leq \beta$, is satisfied. Then these $s$-stage RK schemes are  stable, 
\[
|\bu(t_n)|_{\HN}\leq 2|\bu(0)|_{\HN} \ \leadsto \ |\bu(t_n)|_{\ell^2}\leq 2\const_{\bbH}|\bu(0)|_{\ell^2}.
\]
\end{corollary}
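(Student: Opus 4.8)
The plan is to combine the Strong Stability Preserving (SSP) representations of $\Ps$ with the weighted Halmos inequality \eqref{eq:weighted-Halmos}, so that the power-boundedness of $\Ps(\DLN)$ reduces to the single scalar estimate $r_{{}_{\HN}}(\bbI+\DLN)\leq 1$ already guaranteed by the numerical range hypothesis \eqref{eq:stability-of-RK1}.

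First I would record the SSP decompositions displayed above, expressing each $\Ps(z)$, $s=2,3,4$, as a convex combination of powers of $(1+z)$; for instance $\calP_2(z)=\nicefrac{1}{2}+\nicefrac{1}{2}(1+z)^2$ and $\calP_4(z)=\nicefrac{3}{8}+\nicefrac{1}{3}(1+z)+\nicefrac{1}{4}(1+z)^2+\nicefrac{1}{24}(1+z)^4$. The decisive structural feature is that the coefficients are nonnegative and sum to one, so that $\Ps(\DLN)$ is a genuine convex combination of the matrix powers $(\bbI+\DLN)^k$.

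Next I would estimate $r_{{}_{\HN}}(\Ps(\DLN))$ from above. Since the numerical radius is subadditive and positively homogeneous, and since Halmos' inequality \eqref{eq:weighted-Halmos} gives $r_{{}_{\HN}}\!\big((\bbI+\DLN)^k\big)\leq r^k_{{}_{\HN}}(\bbI+\DLN)$ for each $k$, feeding these into the SSP convex weights yields
\[
r_{{}_{\HN}}\!\big(\Ps(\DLN)\big)\ \leq\ \sum_k c_k\, r^k_{{}_{\HN}}(\bbI+\DLN)\ \leq\ \sum_k c_k\ =\ 1,
\]
using $r_{{}_{\HN}}(\bbI+\DLN)\leq 1$ from \eqref{eq:stability-of-RK1}. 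I would then invoke the weighted Halmos inequality \eqref{eq:weighted-Halmos} once more, now applied to $G=\Ps(\DLN)$, to pass from $r_{{}_{\HN}}(\Ps(\DLN))\leq 1$ directly to the uniform stability bounds $|\bu(t_n)|_{\HN}\leq 2|\bu(0)|_{\HN}$ and $|\bu(t_n)|_{\ell^2}\leq 2\const_{\bbH}|\bu(0)|_{\ell^2}$ asserted in the corollary. For the coercivity specialization I would simply appeal to \rem{rem:corecivity}: $\beta$-coercivity \eqref{eq:coercivity} together with $\Delta t\leq\beta$ forces the inclusion \eqref{eq:CFL-of-FE}, hence $r_{{}_{\HN}}(\bbI+\DLN)\leq 1$.

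The main obstacle to keep in mind is that the numerical radius is \emph{not} submultiplicative, so one cannot expand $\Ps(\DLN)$ into products and estimate termwise. The entire argument rests on the SSP representation expressing $\Ps$ through powers of the \emph{single} matrix $\bbI+\DLN$ --- the one setting in which Halmos' inequality \eqref{eq:weighted-Halmos} is available --- and the only method-specific computation is verifying that each $\Ps$, $s=2,3,4$, admits such a convex decomposition with nonnegative coefficients summing to one.
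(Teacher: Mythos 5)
Your proposal is correct and follows essentially the same route as the paper: the SSP convex decompositions of $\Ps(\DLN)$, $s=2,3,4$, into powers of $\bbI+\DLN$, combined with subadditivity of the numerical radius and the weighted Halmos inequality \eqref{eq:weighted-Halmos}, giving $r_{{}_{\HN}}\!\big(\Ps(\DLN)\big)\leq 1$ and hence the uniform-in-$N$ bounds, with the coercivity hypothesis entering exactly through \rem{rem:corecivity} to secure $r_{{}_{\HN}}\!(\bbI+\DLN)\leq 1$. Your closing observation --- that the lack of submultiplicativity of the numerical radius is precisely why the argument must route through powers of the single matrix $\bbI+\DLN$ --- is also the paper's implicit rationale for the SSP representation.
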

The building block of corollary \ref{cor:SSP} is the condition of  numerical range stability \eqref{eq:stability-of-RK1} originated with \eqref{eq:FE}. While this argument is sharp for the 1-stage forward Euler, this SSP-based argument  is too restrictive for multi-stage RK\hspace*{-0.02cm}$s$. In particular, corollary \ref{cor:SSP} rules out the large sub-class of negative yet non-coercive  $\LN$\!\!'s, due to a numerical range which has non-trivial intersection with the imaginary axes. In particular, this includes the important  sub-class of skew-symmetric (hyperbolic) $\LN$\!\!'s with purely imaginary numerical range. For example, if the one-sided differences in \eqref{eq:transport} are replaced by centered-differences
\be\label{eq:centered}
\bu_{n+1}=(\bbI+\DDoN)\bu_n, \qquad \DoN:=\frac{1}{\Delta x}\left[\begin{array}{ccccc}
 0& 1 & \ldots & \ldots & 0\\
-1 & 0 & 1 & \ddots & \vdots\\
\vdots & \ddots & \ddots &  \ddots & \vdots\\
0 & \ldots & -1 &  0 & 1\\
0 & \ldots & \ldots & -1 & 0
 \end{array}\right]_{N\times N}.
\ee
The numerical range lies on the imaginary interval, $W(\DDoN)= [-iR, iR]$ with $R=R_N=a\mesh\cos(\frac{\pi}{N+1})$. The 1-stage forward Euler \eqref{eq:centered} fails to satisfy the imaginary interval condition, and therefore, corollary \ref{cor:SSP} fails to capture the stability of the corresponding RK\hspace*{-0.02cm}$s$,
$\bu_{n+1}=\Ps(\DDoN)\bu_n$
 for $s=3,4$.

\section{Spectral sets and stability of Runge-Kutta methods}\label{sec:spectral-set}
We now turn our attention to the stability of multi-stage RK methods, $\Ps(\DLN)$. Clearly, spectral analysis is not enough. On the other hand, direct  computation based on $\ell^1$ or $\ell^2$-von Neumann analysis is not accessible:  even the entries in the example of one-sided differences, $\Ps(\DDpN)$, for $s=3,4$, become excessively complicated to write down. Instead, we suggest to pursue a stability argument based on numerical radius along the lines of \eqref{eq:stability-of-RK1}, starting with
\[
r(\Ps(\DLN)) =\max_{\substack{|\bx|=1\\ \bx\in \bbC^N}}\big|\sum_{k=0}^s a_k\langle (\DLN)^k\bx,\bx\rangle\big|.
\]
This requires a proper functional calculus  of numerical range, relating the sets $W(\Ps(\DLN))$ and  $\{|\Ps(z)|\,, : \, z\in W(\DLN)\}$, similar to the role of the spectral mapping theorem \eqref{eq:spectral-mapping} as the centerpiece of spectral stability analysis. To this end we recall the notion of a $K$-\emph{spectral set} developed in \cite{Del1999,CG2019}, which dates back to von Neumann \cite{vN1951}; we refer to \cite{SdV2023} for a most recent overview. 
\begin{definition}[{\bf $K$-spectral sets}] Given $A\in M_N(\bbC)$, we say that $\Omega\subset \bbC$ is a \emph{$K$-spectral set} of $A$ if there exists a finite $K>0$ such that for all analytic $f$'s bounded on $\Omega$, there holds
\be\label{eq:spectral-set}
\|f(A)\|_H \leq K\max_{z\in \Omega}|f(z)|.
\ee
\end{definition}
In a remarkable  work, \cite{Cro2007}, Crouzeix proved that for every matrix $A$, the numerical range $W_H(A)$ is a $K$-spectral set of $A$ with $K\leq 11.08$; this was later improved  to $K=1+\sqrt{2}$, \cite{CP2017}. An elegant proof of  Crouzeix \& Palencia $(1+\sqrt{2})$-bound, \cite{RS2018} is included in an appendix. It follows,  in particular, that for all polynomials $p$,  
\be\label{eq:Cr}
\|p(A)\|_H \leq (1+\sqrt{2})\max_{z\in W_H(A)}|p(z)|.
\ee 

\begin{theorem}[{\bf Stability of Runge-Kutta schemes}]\label{thm:stable-RK-schemes} Consider the $s$-stage explicit
RK method $\Ps(z)=\sum_{k=0}^s a_kz^k$,   associated with region of absolute stability \mbox{$\sA_s=\{z\, :\, |\Ps(z)|\leq 1\}$}. Then, the RK scheme
\[
\bu_{n+1}=\Ps(\DLN)\bu_n, \qquad n=0,1,2, \ldots.
\]
 is stable under the  CFL condition $\Delta t W_{\HN}\!(\LN) \subset \sA_s$,
 \be\label{eq:s-stage-stability}
 \Delta t\,W_{\HN}\!(\LN) \subset \sA_s \ \ \leadsto \ \ |\bu_n|_{{}_{\ell^2}}\leq (1+\sqrt{2})\const_{\bbH}|\bu_0|_{{}_{\ell^2}}, \qquad n=1,2,\ldots.
 \ee
\end{theorem}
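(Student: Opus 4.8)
The plan is to bound the iterates directly through the $\HN$-weighted numerical range, exploiting the $(1+\sqrt 2)$-spectral-set property \eqref{eq:Cr}. Writing $\bu_n=\Ps^n(\DLN)\bu_0$, stability reduces to a uniform-in-$N$ bound on $\|\Ps^n(\DLN)\|_{\HN}$. The decisive point is \emph{not} to iterate a one-step estimate: submultiplicativity together with a per-step constant would only yield $\|\Ps^n(\DLN)\|_{\HN}\le\big((1+\sqrt 2)\max_{z\in\sA_s}|\Ps(z)|\big)^n=(1+\sqrt 2)^n$, which is useless. Instead I would apply the Crouzeix--Palencia inequality \eqref{eq:Cr} once, to the \emph{single} polynomial $f=\Ps^n$, so that the constant $1+\sqrt 2$ multiplies the $n$-th power as a whole.

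Concretely, first I would record the scaling of the numerical range: since $\DLN=\Delta t\,\LN$ and the weighted numerical range is positively homogeneous, $\langle \Delta t\,\LN\bx,\bx\rangle_{\HN}=\Delta t\langle \LN\bx,\bx\rangle_{\HN}$, so that
\[
W_{\HN}(\DLN)=\Delta t\,W_{\HN}(\LN).
\]
Applying \eqref{eq:Cr} with $A=\DLN$, $H=\HN$ and $p=\Ps^n$ (an entire function, hence bounded on the compact convex set $W_{\HN}(\DLN)$) gives
\[
\|\Ps^n(\DLN)\|_{\HN}\le(1+\sqrt 2)\max_{z\in W_{\HN}(\DLN)}|\Ps^n(z)|
=(1+\sqrt 2)\Big(\max_{z\in W_{\HN}(\DLN)}|\Ps(z)|\Big)^n,
\]
using $|\Ps^n(z)|=|\Ps(z)|^n$. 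Now the CFL hypothesis $\Delta t\,W_{\HN}(\LN)\subset\sA_s$ together with the scaling identity places every $z\in W_{\HN}(\DLN)$ inside $\sA_s=\{z:|\Ps(z)|\le1\}$, whence $\max_{z\in W_{\HN}(\DLN)}|\Ps(z)|\le1$. This collapses the right-hand side to $1+\sqrt 2$, uniformly in both $n$ and $N$: $\|\Ps^n(\DLN)\|_{\HN}\le 1+\sqrt 2$.

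It remains to transfer this weighted bound to the Euclidean norm. From $0<\const_{\bbH}^{-1}\le\HN\le\const_{\bbH}$ one has $\const_{\bbH}^{-1}|\bw|_{\ell^2}^2\le|\bw|_{\HN}^2\le\const_{\bbH}|\bw|_{\ell^2}^2$ for every $\bw$, and therefore
\[
|\bu_n|_{\ell^2}\le\sqrt{\const_{\bbH}}\,|\bu_n|_{\HN}\le\sqrt{\const_{\bbH}}\,\|\Ps^n(\DLN)\|_{\HN}\,|\bu_0|_{\HN}\le(1+\sqrt 2)\,\const_{\bbH}\,|\bu_0|_{\ell^2},
\]
which is exactly \eqref{eq:s-stage-stability}.

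The only genuine ingredient is the uniform-in-$N$ spectral-set constant, and this is precisely what \eqref{eq:Cr} supplies: Crouzeix--Palencia guarantees that $W_{\HN}(\LN)$ is a $(1+\sqrt 2)$-spectral set for $\LN$ with a constant independent of the matrix, hence independent of $N$ and of the degree of the polynomial fed into it. Thus there is no real obstacle once \eqref{eq:Cr} is in hand --- the subtlety is purely strategic, namely feeding the full power $\Ps^n$ into the spectral-set inequality rather than chaining a one-step estimate (which is exactly what defeats the spectral, resolvent, and submultiplicativity-based arguments discussed earlier). Everything else --- the homogeneity of the numerical range, reading the CFL condition through the definition of $\sA_s$, and the norm equivalence --- is routine.
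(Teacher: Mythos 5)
Your proof is correct and is essentially the paper's own argument: apply the Crouzeix--Palencia bound \eqref{eq:Cr} a single time to $p=\Ps^n$, use the CFL hypothesis $\Delta t\,W_{\HN}\!(\LN)\subset\sA_s$ to bound the maximum of $|\Ps^n|$ over the numerical range by its maximum over $\sA_s$ (which is $\le 1$), and then pass from $\|\Ps^n(\DLN)\|_{\HN}\le 1+\sqrt{2}$ to the $\ell^2$ bound via $0<\const_{\bbH}^{-1}\le\HN\le\const_{\bbH}$. The details you make explicit --- the homogeneity $W_{\HN}(\DLN)=\Delta t\,W_{\HN}(\LN)$ and the norm-equivalence bookkeeping giving the factor $\const_{\bbH}$ --- are exactly what the paper's one-line proof leaves implicit.
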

\noindent
For proof we apply \eqref{eq:Cr} with $p=\Ps^n$:
\[
\|\Ps^n(\DLN)\|_{\HN} \leq (1+\sqrt{2})\max_{z\in \Delta t W_{\HN}\!(\LN)}|\Ps^n(z)| \leq 
(1+\sqrt{2})\max_{z\in \sA_s}|\Ps^n(z)| \leq 1+\sqrt{2},
\]
and hence $\|\Ps^n(\DLN)\| \leq (1+\sqrt{2})\const_{\bbH}$.
\begin{remark}[{\bf Implicit RK methods}] The  argument above makes a critical use of the striking fact that the spectral set bound, $K=1+\sqrt{2}$, is independent of  neither the increasing degree, $\textnormal{deg}(\Ps^n)=sn$, nor  of the increasingly large dimension, $\textnormal{dim}(\LN)=N$.  In fact,   since \eqref{eq:Cr} applies  to the larger algebra of \emph{rational functions} bounded on $W_H(A)$,  theorem \ref{thm:stable-RK-schemes} can be equally well formulated to general \underline{implicit} RK methods, \cite[II.7]{HNW1993}.
\end{remark} 

 We recall the spectral stability analysis  \eqref{eq:absolute-stability} which is quantified in terms  of the  the spectrum $\sigma(\LN)$
\[
\Delta t\,\sigma(\LN) \subset \sA_s, \qquad \sigma(A):=\{\lambda_k(A)\, : \, k=1,2,\ldots,N\}. 
\]
In the terminology of \eqref{eq:spectral-set}, the spectrum $\sigma(\LN)$ is not a spectral set for $\LN$. Theorem  \ref{thm:stable-RK-schemes} tells us that replacing  the spectrum with the larger set of $H$-weighted numerical range, $W_{\HN}\!(\LN) \supset \sigma(\LN)$, provides a  very general framework for the stability of any Runge-Kutta scheme, in conjunction with  any $\LN$. For example, the forward Euler \eqref{eq:FE} applies to the one-sided difference \eqref{eq:one-sided-revisited} which was covered in Corollary \ref{cor:FE}. 
Observe that for \emph{normal} matrices\footnote{$\LN^*\LN=\LN\LN^*$ where $\LN^*$ is the $\ell^2$-adjoint of $\LN$.}, $\LN$, there holds $\textnormal{conv}\{\sigma(\LN)\}=W(\LN)$, e.g., \cite{Hen1962}. Thus, the gap $W_{\HN}\!(\LN)\backslash\textnormal{conv}\{\sigma(\LN)\}$ comes into play in the stability statement \eqref{eq:s-stage-stability} when normality uniform-in-$N$ fails --- precisely the scenario described in  section \ref{sec:spectral-not-enough} for failure of spectral analysis to secure stability.\newline
A main drawback of the CFL condition \eqref{eq:s-stage-stability} is its formulation in terms of a weighted numerical range  which is not always easily accessible. Here comes the imaginary interval condition, \eqref{eq:along-iaxis}, which provides an accessible sufficient  condition for stability of multi-stage RK methods.
\begin{theorem}[{\bf Stability of Runge-Kutta methods}]\label{thm:stable-RK-methods}
Consider the $s$-stage explicit
RK method   and assume it satisfies the imaginary interval condition \eqref{eq:along-iaxis}, namely --- there exists $R_s>0$ such that
\be\label{eq:IIC-revisited}
  \max_{-R_s\leq \sigma \leq R_s}|\Ps(i\sigma)|\leq 1,  \qquad \Ps(z)=1+z+ a_2z^2 + \ldots + a_sz^s.
\ee
Then, there exists  a consonant $0<\CFL_s<R_s$ such that  for all negative $\LN$\!\!'s, \eqref{eq:negative}, the RK method 
\[
\bu_{n+1}=\Ps(\DLN)\bu_n, \qquad n=0,1,2, \ldots, 
\]
  is stable under the  CFL condition $\Delta t \cdot r_{{}_{\HN}}\!(\LN) \leq \CFL_s$, 
 \be\label{eq:s-stage-stability-method}
 \Delta t\cdot r_{{}_{\HN}}\!(\LN) \leq \CFL_s \ \ \leadsto \ \ |\bu_n|_{{}_{\ell^2}}\leq (1+\sqrt{2})\const_{\bbH}|\bu_0|_{{}_{\ell^2}}, \qquad n=1,2,\ldots.
 \ee
\end{theorem}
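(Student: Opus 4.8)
The plan is to reduce the stability of a general multi-stage RK method (Theorem~\ref{thm:stable-RK-methods}) to the already-established numerical-range stability of RK schemes (Theorem~\ref{thm:stable-RK-schemes}). Theorem~\ref{thm:stable-RK-schemes} gives stability whenever $\Delta t\, W_{\HN}\!(\LN)\subset \sA_s$; so the whole task is to show that the hypothesis of Theorem~\ref{thm:stable-RK-methods} --- the imaginary interval condition \eqref{eq:IIC-revisited} together with the CFL bound $\Delta t\cdot r_{{}_{\HN}}\!(\LN)\leq \CFL_s$ --- forces the \emph{scaled} numerical range to sit inside the region of absolute stability. The key geometric fact is that for a negative $\LN$ (in the sense of \eqref{eq:negative}), the weighted numerical range $W_{\HN}\!(\LN)$ lies in the closed left half-plane $\{\Real z\leq 0\}$, since $2\Real\langle\LN\bx,\bx\rangle_{\HN}=\langle(\HN\LN^\top+\LN\HN)\bx,\bx\rangle\leq 0$. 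Combined with the numerical radius bound, every $z\in \Delta t\, W_{\HN}\!(\LN)$ satisfies $\Real z\leq 0$ and $|z|\leq \CFL_s$, i.e. it lands in the half-disc $B^-_{\CFL_s}(0)$.

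The heart of the matter is therefore to produce a radius $\CFL_s$, independent of $N$, for which this half-disc is contained in $\sA_s$:
\[
B^-_{\CFL_s}(0):=\{z:\ \Real z<0,\ |z|<\CFL_s\}\subset \sA_s.
\]
First I would record that $\sA_s$ is a closed neighborhood of the origin (since $\Ps(0)=1$ and $\Ps'(0)=a_1=1$, the boundary $|\Ps(z)|=1$ passes through $0$ tangent to the imaginary axis), and that the imaginary interval condition guarantees the boundary segment $[-iR_s,iR_s]$ belongs to $\sA_s$. The plan is to invoke the characterization of the interval condition already alluded to in the excerpt --- the result of \cite[Theorem 3.1]{KS1992} promised in \eqref{eq:interval-condition} --- which says precisely that \eqref{eq:along-iaxis} is equivalent to the existence of a genuine left half-disc inscribed in $\sA_s$. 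Concretely, one shows that because $|\Ps(i\sigma)|\leq 1$ on $[-R_s,R_s]$ and $\sA_s$ is open to the left near the imaginary axis, a compactness/continuity argument yields some $\CFL_s\in(0,R_s)$ with the inscribed half-disc property. This is the step I expect to be the main obstacle: the inclusion is not automatic from the interval condition alone, since $\sA_s$ can pinch in toward the imaginary axis away from the origin, so one must quantify how far into the left half-plane one may safely go while staying below $R_s$ in modulus; this is exactly the content one borrows from the Kreiss--Wu / Kinnmark--Shoberg analysis.

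Once the inscribed half-disc $B^-_{\CFL_s}(0)\subset\sA_s$ is secured, the proof closes quickly. Under $\Delta t\cdot r_{{}_{\HN}}\!(\LN)\leq \CFL_s$, every point of $\Delta t\,W_{\HN}\!(\LN)$ lies in $\{\Real z\leq 0\}\cap\{|z|\leq \CFL_s\}\subset \overline{\sA_s}$, so the CFL hypothesis of Theorem~\ref{thm:stable-RK-schemes} holds and \eqref{eq:s-stage-stability} gives $|\bu_n|_{\ell^2}\leq (1+\sqrt2)\const_{\bbH}|\bu_0|_{\ell^2}$, which is \eqref{eq:s-stage-stability-method}. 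The only subtlety to watch is the boundary case $|z|=\CFL_s$ or $\Real z=0$: one should take $\CFL_s$ strictly less than $R_s$ (as the statement does) so that the relevant inclusion can be taken with the closed half-disc, ensuring $|\Ps^n|\leq 1$ on all of $\Delta t\,W_{\HN}\!(\LN)$ and not merely on its interior. The essential gain over the coercivity-based Corollary~\ref{cor:SSP} is that replacing the numerical-range containment by a numerical-radius bound lets skew-symmetric (hyperbolic) $\LN$\!\!'s, whose numerical range sits on the imaginary axis, be handled --- precisely the case the SSP argument missed.
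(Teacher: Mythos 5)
Your proposal follows essentially the same route as the paper's proof: negativity of $\LN$ places $W_{\HN}\!(\LN)$ in the left half-plane, the numerical radius bound then confines $\Delta t\,W_{\HN}\!(\LN)$ to the semi-disc $B^-_{\CFL_s}$, the inscribed semi-disc $B^-_{\CFL_s}\subset\sA_s$ is borrowed from Kreiss--Scherer, and Theorem~\ref{thm:stable-RK-schemes} closes the argument. The only (cosmetic) difference is that you frame the inscribed-half-disc step as a compactness argument built on \cite[Theorem 3.1]{KS1992}, whereas the paper simply quotes \cite[Theorem 3.2]{KS1992}, which asserts exactly that inclusion --- so the ``main obstacle'' you flag is already settled by the cited result, just as you suspected.
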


\begin{proof} Recall $B_{\alpha}^-$ denotes the semi-disc,
$B_{\alpha}^-:=\{z :\, \textnormal{\Real}z\leq 0, \ \ |z|\leq \alpha\}$.
Consider an arbitrary negative $\LN$,
\[
2\textnormal{\Real} \langle \LN\bx,\bx\rangle_{\HN} = \langle \LN^\top\HN+\HN\LN\bx,\bx\rangle \leq 0
\]
The negativity of $\LN$ states that the weighted numerical range $W_{\HN}\!(\LN)$ lies on the left side of complex plane, 
and in fact, inside the left semi-disc
\[
W_{\HN}\!(\LN) \subset B_{r_{{}_{\HN}}\!(\LN)}^-:=\{z\, :\, \textnormal{\Real}z\leq 0, \ \ |z|\leq r_{{}_{\HN}}\!(\LN)\}.
\]
Next, we make use of \cite[Theorem 3.2]{KS1992} which asserts\footnote{Note that this requires $\Ps(0)=\Ps'(0)=1$ in \eqref{eq:IIC-revisited}.} that 
for an $s$-stage RK method satisfying the  imaginary interval condition, its region of absolute stability  contains a non-trivial semi-disc $B_{\CFL_s}^{-}$ with $\CFL_s\leq R_s$,  so that
\be\label{eq:BCsinAs}
\sA_s \supset B_{\CFL_s}^-:=\{z\, :\, \textnormal{\Real}z\leq 0, \ \ |z|\leq \CFL_s\}, \qquad \CFL_s\leq R_s.
\ee
We conclude that for small step-size \eqref{eq:s-stage-stability-method}
\[
\Delta t\,W_{\HN}\!(\LN) \subset \Delta t\, B_{r_{{}_{\HN}}\!\!(\LN)}^- = B_{ \Delta t \cdot r_{{}_{\HN}}\!\!(\LN)}^- \subset  B_{\CFL_s}^- \subset \sA_s.
\]
 Theorem  \ref{thm:stable-RK-schemes} implies stability \eqref{eq:stable-RK-scheme} with  $\const_{\bbL}=(1+\sqrt{2})\const_{\bbH}$.
\end{proof}
\begin{remark} We note that theorem \ref{thm:stable-RK-methods} makes use of the semi-disc $B^-_{\CFL_s}$ as a spectral set for $\Ps(\DLN)$. In this case, one expects a sharper  bound, compared with \eqref{eq:Cr}, 
\cite[\S3.2]{SdV2023},   $\|p(A)\|_H \leq 2\max_{z\in W_H(A)}|p(z)|$. The constant 2  agrees with Crouzeix's conjecture \cite{Cro2007} regarding the  optimality of the numerical range as $2$-spectral set.  
\end{remark}
\subsection{Optimality of the numerical radius-based CFL condition} We observe that the CFL condition quoted in \eqref{eq:s-stage-stability-method}, 
\be\label{eq:CFL-optimal}
\Delta t\cdot r_{{}_{\HN}}\!\!(\LN) \leq \CFL_s,
\ee
 offers  a refinement of the CFL condition \eqref{eq:CFL}. Indeed, 
since $\HN$ is uniformly bounded $0<\const^{-1}_{\bbH}\leq \HN\leq \const_{\bbH}$, we have 
\[
r_{{}_{\HN}}\!\!(\LN) \leq \|\LN\|_{\HN} \leq \const_{\bbH}\|\LN\|,
\]
and hence, the CFL condition --- compare with \eqref{eq:CFL},
$\Delta t\cdot\|\LN\|\leq \CFL'_s$ with $\CFL'_s:=\CFL_s/\const_{\bbH}$,
  implies that \eqref{eq:s-stage-stability-method} holds, and stability follows.\newline
 In fact, we claim that  \eqref{eq:CFL-optimal} offers an optimal CFL condition in the following sense. The proof of theorem \ref{thm:stable-RK-methods} compares   two semi-discs: on one hand we identified $B_{\calC_s}^-$ as the largest semi-disc inscribed inside $\sA_s$ (this is a property of the RK method under consideration); on the other hand, we identified  $B^-_{r_{{}_{\HN}}\!\!(\LN)}$ as the smallest semi-disc which contains $W_{\HN}\!(\LN)$.
 The CFL condition \eqref{eq:CFL-optimal} secures the dilation of the latter semi-disc inside the former, and there, we  seek  the smallest semi-disc associated with $\LN$ which satisfies a set of desired requirements.
 We claim that we cannot find a smaller semi-disc which will secure this line of argument. Indeed, let $\vnorm{\cdot}$ denote an arbitrary  (vector) norm on $M_N(\bbC)$,  with a semi-disc $B^-_{\vnorm{\LN}}$ which would be a candidate  for a better CFL condition, i.e., an even smaller semi-disc $B^-_{\vnorm{\LN}}\subset B^-_{r_{{}_{\HN}}\!\!(\LN)}$.  Clearly, by the necessity encoded in \eqref{eq:absolute-stability}, the CFL condition requires that $\vnorm{A}$ is \emph{spectrally dominant} in the sense that $\vnorm{A}\geq |\lambda_{\textnormal{max}}(A)|$ for all $A\in M_N(\bbC)$.
 Moreover, since power-boundedness is invarainat under unitary transformations,
 $\|(UAU^*)^n\|_{\HN}=\|A^n\|_{\HN}$, we ask that the semi-disc  associated with $\vnorm{\cdot}$ be unitarily invariant, 
 \[
 UB^-_{\vnorm{\LN}}U^*=B^-_{\vnorm{\LN}} \ \  \textnormal{for all} \ \ U'\textnormal{s} \ \ \textnormal{such that} \ \ |U\bx|_{\HN}=|\bx|_{\HN}.
 \]
  It follows from the main theorem of \cite{FT1984} that the semi-disc $B^-_{\vnorm{\LN}}$ must contains $B^-_{r_{{}_{\HN}}\!\!(\LN)}$. That is, the corresponding CFL condition \eqref{eq:CFL-optimal} is optimal in the sense that it is the smallest, spectrally dominant, unitarily invariant semi-disc which 
 makes the argument of theorem  \ref{thm:stable-RK-methods} work.
  
A main aspect of theorem \ref{thm:stable-RK-methods} is  going beyond any specific coercivity requirement which was sought in the SSP-based arguments in section \ref{sec:numerical-RKs}. It applies to \emph{all} negative $\LN$\!\!'s, thus addressing the question sought in \cite[\S3.5]{LT1998}.
A precise characterization for  RK methods satisfying the imaginary interval condition was given in \cite[Theorem 3.1]{KS1992}. Consider an explicit $s$ stage RK method, accurate of order $r\geq 1$,
\begin{subequations}\label{eqs:RK-IIC} 
\be\label{eq:RK-IIC}
\Ps(z)=\sum_{k=0}^r \frac{z^k}{k!} + \sum_{k=r+1}^s a_kz^k, \qquad r\geq 1.
\ee
It satisfies the imaginary interval condition \eqref{eq:along-iaxis} if and only if
\be\label{eq:interval-condition}
\left\{
\begin{array}{ll}
\displaystyle (-1)^{\frac{r+1}{2}}(a_{r+1}-1) <0, & r \ \textnormal{is odd},\\ \\
\displaystyle (-1)^{\frac{r+2}{2}}(a_{r+2}-(r+2)a_{r+1}+r+1) <0, & r \ \textnormal{is even}.
\end{array}
\right.
\ee
\end{subequations}
In the particular case of $s=r=3,4$ we find that the 3-stage RK method \eqref{eq:RK3} and 4-stage RK method \ref{eq:RK4} satisfy the  imaginary interval condition and hence  the existence of semi-discs with radii
$\CFL_3=\sqrt{3}$ and $\CFL_4=2.61$, shown in figure \ref{fig:semi-circle3+4} which imply stability under the  respective CFL conditions, 
\[
\Delta t \cdot\|\LN\|\leq \CFL'_s, \qquad \CFL'_s=\CFL_s/\const_{\bbH}.
\]
 In particular, this extends the strong stability statement of 3-stage \eqref{eq:RK3} in \cite[Theorem 2]{Tad2002} and provides the first stability proof for the 4-stage RK \eqref{eq:RK4} for arbitrarily large systems.\newline
 Condition \eqref{eq:interval-condition} becomes more restrictive  for higher order methods; instead, one can increase $r$ and use $s$-stage protocol, $s>r$ to form  a dissipative term  $\sum_{k=r+1}^s a_kz^k$ which enforces the  imaginary interval condition. In particular,  the 7-stage Dormand-Prince  method \cite{DP1980}, with embedded fourth- and fifth-order accurate RK45,  $(r,s)=(5,7)$ which is used in MATLAB, does satisfy the imaginary interval condition \eqref{eq:interval-condition}, \cite{SR1997}. See the example  of the 10-stage explicit RK method SSPRK(10,4) in \cite[Fig. 2]{RO2018}.
 
 \begin{figure}
 \begin{eqnarray*}
\includegraphics[scale = 0.62]{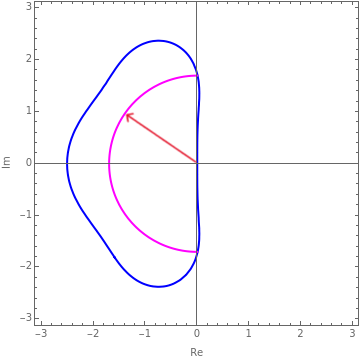} &
\includegraphics[scale = 0.62]{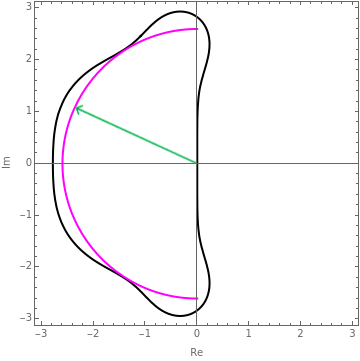}
\end{eqnarray*}
\caption{The semi-circles $B^-_{\CFL_s}(0)$ inscribed inside $\sA_3$ (left) and $\sA_4$ (right),}\label{fig:semi-circle3+4}
\end{figure}

%%%%%%%%%%%%%%%%%%%%%
\section{Stability of  time-dependent methods of lines}\label{sec:results}
We demonstrate application of the new stability results  for arbitrarily large systems in the context of methods of lines for difference approximation of the scalar hyperbolic equation
\[
y_t=a(x)y_x, \quad (t,x)\in {\mathbb R}_+\times [0,1],
\] 
augmented with proper boundary conditions. The stability results extend, mutatis mutandis\footnote{In particular, $\ell^2$-stability needs to be adjusted to weighted $H$-stability,  weighted by the smooth symmetrizer $H=H(x,\xi)$ so that $H(x,\xi)\sum_jA_j(x)e^{ij\xi}$ is symmetric.}, to multi-dimensional hyperbolic problems,  $\by_t =\sum_{j=1}^d A_j(x)\by_{x_j}$. Stability theories for such difference approximations were developed in the classical works in the 50s--70s, e.g., \cite{LR1956,LW1964,LN1964,Kre1964,RM1967,Kre1968,GKS1972} and can be found in the more recent texts of \cite{LeV2007,GKO2013,Hes2017}. 
Our aim here is to revisit the question of stability for RK time-discretizations of such difference approximations, from a perspective of the stability theory developed in section \ref{sec:spectral-set}.   A central part of this approach   requires computation of the (weighted) numerical range of the large matrices that arise in the context of such difference approximations. 
The development of full stability theory along these lines is beyond the scope of this paper, and is left for future work.

\subsection{Periodic problems. Constant coefficients}\label{sec:periodic} We consider the $1$-periodic problem 
\[
\left\{\begin{split}
y_t(x,t)&=ay_x(x,t), \quad (t,x)\in {\mathbb R}_+\times [0,1]\\
y(0,t)&=y(1,t).
\end{split}\right.
\]
It spatial part is discretized using  finite-difference method with constant coefficients (depending on $a$),  $\{\diffq_\alpha\}$,   and acting on a discrete grid, $x_\nu=\nu\Delta x, \, \Delta x=\nicefrac{1}{N}$,
\[
\ddt y(x_\nu,t)=\diffQ(E)y(x_\nu,t), \quad \nu=0,1,\ldots, N-1, \qquad \diffQ(E):=\frac{1}{\Delta x}\sum_{\alpha=-\ell}^r \diffq_\alpha E^\alpha.
\]
Here $E$ is the $1$-periodic translation operator, $Ey_\nu=y_{(\nu+1)[mod N]}$. The resulting scheme amounts to a system of ODEs for the $N$-vector of unknowns, $\by(t)=\big(y(x_0,t),\ldots, y(x_{N-1},t)\big)^\top$, which admits the \emph{circulant} matrix representation
\be\label{eq:cE}
\dot{\by}(t)=\QEN \by, \quad \QEN=\frac{1}{\Delta x}\sum_{\alpha=-\ell}^r \diffq_\alpha\bbE^\alpha, \qquad \EN:=\left[\begin{array}{ccccc}
0& 1 & \ldots & \ldots & 0\\
0 & 0 & 1 & \ddots & \vdots\\
\vdots & \ddots & \ddots &  \ddots & \vdots\\
0 & \ldots & \ddots &  0 & 1\\
1 & \ldots & \ldots & 0 & 0\end{array}\right]_{N\times N}.
\ee
The numerical range of circulant matrices is given by  convex polytopes.
Indeed, let $\bbF$ denote the unitary Fourier matrix, $\displaystyle \bbF_{jk}=\Big\{\frac{1}{\sqrt{N}}e^{\nicefrac{2\pi ijk}{N}}\Big\}_{j,k=1}^N$.
Then $\bbF$ diagonalizes $\EN$,
\[
\langle \EN\bx,\bx\rangle=\langle\widehat{\bbE}_N\widehat{\bx},\widehat{\bx}\rangle, \quad \widehat{\bbE}_N:=\bbF^*\EN\bbF=\left[\begin{array}{ccccc}
e^{\frac{2\pi i}{N}}& 0 & \ldots & \ldots & 0 \\
0 & e^{2\frac{2\pi i}{N}} & 0 & \ddots & \vdots\\
\vdots & \ddots & \ddots &  \ddots & \vdots\\
0 & \ldots & \ddots &  e^{(N-1)\frac{2\pi i}{N}} & 0\\
0 & \ldots & \ldots & \ldots & 1\end{array}\right], \quad \widehat{\bx}:=\bbF^*\bx.
\]
and hence $\displaystyle W(\EN)=\Big\{\sum_{j=1}^N |\widehat{x}_j|^2 e^{\nicefrac{2\pi ij}{N}} \, : \ 
\sum_j |\widehat{x}_j|^2=1\Big\}$ is the regular $N$-polytope with vertices 
at $\{e^{\nicefrac{2\pi ij}{N}}\}_{j=1}^N$. This should be compared with the numerical range of the Jordan block \eqref{eq:W-of-J}.\newline
It follows that $\widehat{\QEN}=\diffQ(\widehat{\bbE}_N)$ and hence the action of the $N\times N$ circulant $\QEN$ is encoded it terms of its \emph{symbol}, 
$\displaystyle \widehat{\diffq}(\xi):=\frac{1}{\Delta x}\sum_\alpha \diffq_\alpha e^{i\alpha\xi}$,
\[
\langle \QEN\bx,\bx\rangle=\langle\widehat{\QEN}\widehat{\bx},\widehat{\bx}\rangle, \qquad \widehat{\QEN}=\left[\begin{array}{ccccc}
\widehat{\diffq}(\frac{2\pi}{N})& 0 & \ldots & \ldots & 0 \\
0 & \widehat{\diffq}(2\frac{2\pi}{N}) & 0 & \ddots & \vdots\\
\vdots & \ddots & \ddots &  \ddots & \vdots\\
\vdots & \ddots & \ddots &  \widehat{\diffq}((N\!\!-\!\!1)\frac{2\pi}{N}) & 0\\
0 & \ldots & \ldots & 0 & \widehat{\diffq}(2\pi)\end{array}\right].
\]
\begin{lemma}[{\bf Numerical range of circulant matrices}]\label{lem:Wcirculant}
The numerical range of the circulant matrix  $\QEN$ is given by the convex polytope with vertices 
at $\{\widehat{\diffq}(\nicefrac{2\pi j}{N})\}_{j=1}^N$,
\[
W(\QEN)=\Big\{\sum_j |\widehat{x}_j|^2 \,\widehat{\diffq}\big(\nicefrac{2\pi j}{N}\big) \, : \ |\widehat{\bx}|=1\Big\}.
\]
\end{lemma}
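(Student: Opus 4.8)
The plan is to reduce the computation to the diagonalization already recorded above and then invoke the standard fact that the numerical range of a normal matrix is the convex hull of its spectrum. First I would exploit unitary invariance: since $\bbF$ is unitary, the substitution $\widehat{\bx}=\bbF^*\bx$ is a bijection of the unit sphere of $\bbC^N$ onto itself preserving the norm, and the identity $\langle \QEN\bx,\bx\rangle=\langle\widehat{\QEN}\widehat{\bx},\widehat{\bx}\rangle$ displayed above immediately gives $W(\QEN)=W(\widehat{\QEN})$. Thus it suffices to compute the numerical range of the diagonal matrix $\widehat{\QEN}$.

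Next I would evaluate $W(\widehat{\QEN})$ directly. Writing $\lambda_j:=\widehat{\diffq}(\nicefrac{2\pi j}{N})$ for the diagonal entries, any unit vector $\widehat{\bx}$ yields
\[
\langle\widehat{\QEN}\widehat{\bx},\widehat{\bx}\rangle=\sum_{j=1}^N \lambda_j\,|\widehat{x}_j|^2,\qquad \sum_{j=1}^N|\widehat{x}_j|^2=1,
\]
which is exactly the set appearing in the statement of the lemma. To identify it as the asserted polytope, I would observe that the map $\widehat{\bx}\mapsto(|\widehat{x}_1|^2,\ldots,|\widehat{x}_N|^2)$ sends the unit sphere onto the standard simplex $\Delta_N:=\{(t_1,\ldots,t_N):t_j\ge 0,\ \sum_j t_j=1\}$ --- it clearly lands in $\Delta_N$, and every point of $\Delta_N$ is attained via $\widehat{x}_j=\sqrt{t_j}$. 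Hence
\[
W(\QEN)=\Big\{\sum_{j=1}^N t_j\,\lambda_j\ :\ (t_1,\ldots,t_N)\in\Delta_N\Big\}=\textnormal{conv}\{\lambda_1,\ldots,\lambda_N\},
\]
the convex polytope with vertices among $\{\widehat{\diffq}(\nicefrac{2\pi j}{N})\}_{j=1}^N$.

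There is no real obstacle in this argument; the single point deserving a word of care is the word ``vertices''. The genuine extreme points of the polytope are only those $\lambda_j$ lying on the boundary of $\textnormal{conv}\{\lambda_j\}$, while repeated or interior values are not true vertices. This does not affect the set itself, since discarding non-extreme generators leaves the convex hull unchanged, so the stated formula holds verbatim. I note that convexity of $W(\QEN)$ is in any case guaranteed a priori by the Hausdorff--Toeplitz theorem; the computation above simply makes the polytope explicit for circulants, paralleling the disc description \eqref{eq:W-of-J} obtained for the Jordan block.
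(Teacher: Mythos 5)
Your proposal is correct and follows essentially the same route as the paper: diagonalize $\QEN$ by the unitary Fourier matrix $\bbF$, use the identity $\langle \QEN\bx,\bx\rangle=\langle\widehat{\QEN}\widehat{\bx},\widehat{\bx}\rangle$ to reduce to the diagonal matrix $\widehat{\QEN}$, and read off the numerical range as the set of convex combinations of the symbol values $\widehat{\diffq}(\nicefrac{2\pi j}{N})$. Your added details --- the surjectivity of $\widehat{\bx}\mapsto(|\widehat{x}_j|^2)_j$ onto the simplex and the caveat that only extreme generators are genuine vertices --- are sound refinements of what the paper leaves implicit.
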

We now appeal to theorem \ref{thm:coercive-FE} which secures the stability of forward Euler time discretization for $\LN=\QEN$, provided the CFL condition $\Delta t  W(\QEN) \subset B_1(-1)$ holds.
  \begin{proposition}[{\bf Stability --- difference  schemes with constant coefficients. I}]\label{prop:coercive-QE}
 Consider the fully-discrete finite difference scheme
 \[
 \bu_{n+1}=\bu_n + \frac{\Delta t}{\Delta x}\sum_\alpha \diffq_\alpha\EN^\alpha \hspace*{0.04cm}\bu_n, \quad n=0,1,2,\ldots.
 \]
 The scheme is stable under the CFL condition, 
 \be\label{eq:CFLQ}
 \max_{1\leq j\leq N}\Big|1+\Delta t \hspace*{-0.04cm}\cdot\hspace*{-0.04cm}\widehat{\diffq}(\nicefrac{2\pi j}{N})\Big|\leq 1, \qquad  \widehat{\diffq}(\xi):=
\frac{1}{\Delta x}\sum_\alpha \diffq_\alpha e^{i\alpha\xi},
 \ee 
 and the following stability bound  holds 
 $ |\bu_n|_{{}_{\ell^2}}\leq 2|\bu_0|_{{}_{\ell^2}}, \ \forall n\geq 1$.
  \end{proposition}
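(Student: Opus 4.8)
The plan is to recognize the given scheme as the forward Euler time discretization applied to the circulant matrix $\LN=\QEN$ and then invoke \thm{thm:coercive-FE} verbatim. The scheme reads $\bu_{n+1}=(\bbI+\DQEN)\bu_n=\calP_1(\DQEN)\bu_n$, so stability will follow once we verify the CFL hypothesis of \thm{thm:coercive-FE}, namely that $\Delta t\,W(\QEN)\subset \sA_1=B_1(-1)$. Since $\QEN$ is circulant, hence normal, the natural symmetrizer is $\HN=\bbI$ with $\const_{\bbH}=1$; this is exactly what produces the clean factor $2$ in the asserted bound, with no condition-number penalty.

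The crux is a convexity reduction. By \lem{lem:Wcirculant}, the numerical range $W(\QEN)$ is the convex polytope whose vertices are the symbol values $\{\widehat{\diffq}(\nicefrac{2\pi j}{N})\}_{j=1}^N$; dilating by $\Delta t$, the set $\Delta t\,W(\QEN)$ is the convex hull of $\{\Delta t\,\widehat{\diffq}(\nicefrac{2\pi j}{N})\}_{j=1}^N$. The forward Euler region of absolute stability $\sA_1=\{z:|1+z|\leq 1\}=B_1(-1)$ is convex. Therefore the containment $\Delta t\,W(\QEN)\subset B_1(-1)$ holds if and only if every vertex lies in $B_1(-1)$, i.e. $|1+\Delta t\,\widehat{\diffq}(\nicefrac{2\pi j}{N})|\leq 1$ for all $j$, which is precisely the CFL condition \eqref{eq:CFLQ}. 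With this containment in hand, \thm{thm:coercive-FE} applies with $\HN=\bbI$ and yields $|\bu_n|_{\ell^2}\leq 2\const_{\bbH}|\bu_0|_{\ell^2}=2|\bu_0|_{\ell^2}$ for all $n\geq 1$.

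I expect no substantive obstacle here: the argument is essentially bookkeeping once the forward Euler theorem and the circulant numerical-range lemma are available. The one point deserving care is the equivalence between the scalar CFL condition \eqref{eq:CFLQ}---a pointwise bound on the symbol over the discrete frequencies $\nicefrac{2\pi j}{N}$---and the geometric containment $\Delta t\,W(\QEN)\subset B_1(-1)$. This equivalence rests entirely on the \emph{convexity} of both the numerical range (a polytope, by \lem{lem:Wcirculant}) and the target disc $B_1(-1)$, so that testing the finitely many vertices suffices; everything else is a direct application of \thm{thm:coercive-FE}.
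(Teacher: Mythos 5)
Your proposal is correct and follows essentially the same route as the paper: identify the scheme as forward Euler with $\LN=\QEN$, use Lemma~\ref{lem:Wcirculant} to describe $W(\QEN)$ as the polytope with vertices $\{\widehat{\diffq}(\nicefrac{2\pi j}{N})\}$, and apply Theorem~\ref{thm:coercive-FE} with $\HN=\bbI$ under the containment $\Delta t\,W(\QEN)\subset B_1(-1)$. Your explicit convexity argument (testing only the vertices against the convex disc $B_1(-1)$) is exactly the step the paper leaves implicit, so it is a welcome but not divergent addition.
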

Since the CFL condition \eqref{eq:CFLQ} guarantees that $\LN=\bbI+\DQEN$ is coercive, the result goes over to SSP-based multi-stage RK time differencing. In fact, theorem \ref{thm:stable-RK-methods} applies for multi-stage RK time differencing and for \underline{all} negative $\QEN$'s.
\begin{proposition}[{\bf Stability --- difference  schemes with constant coefficients. II}]\label{prop:negative-QE} 
Consider the  fully-discrete finite difference scheme
\be\label{eq:negative-QE}
\begin{split}
\bu_{n+1}=\Ps\big(\DQEN\big)\bu_n, \ \  & n=0,1,2, \ldots, \  \\
 & \Ps(z)\!=\!\sum_{k=0}^s a_kz^k, \quad    Q(\EN)\!=\!\frac{1}{\Delta x}\sum_\alpha \diffq_\alpha \EN^\alpha.
 \end{split}
\ee
Here, $\Ps$ is an $s$-stage RK stencil  satisfying the imaginary interval condition, so that \eqref{eq:BCsinAs} holds with $\CFL_s>0$.
If the spatial discretization is neagtive, $\Real \widehat{\diffq}(\nicefrac{2\pi j}{N})\leq 0$, then the scheme \eqref{eq:negative-QE} is stable under the CFL condition
 \be\label{eq:CFL-negative-QE}
\max_{1\leq j\leq N}|\Delta t\cdot \widehat{\diffq}(\nicefrac{2\pi j}{N})|\leq \CFL_s,\qquad \widehat{\diffq}(\xi)=Q(e^{i\xi}),
\ee
 and the following stability bound  holds,
 \[
 |\bu_n|_{{}_{\ell^2}}\leq (1+\sqrt{2})|\bu_0|_{{}_{\ell^2}}, \qquad n=1,2,\ldots.
 \]
\end{proposition}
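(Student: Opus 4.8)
The plan is to reduce Proposition~\ref{prop:negative-QE} to a direct application of Theorem~\ref{thm:stable-RK-methods}, using Lemma~\ref{lem:Wcirculant} to make the abstract numerical-range hypothesis concrete for the circulant operator $\QEN$. The underlying point is that for a circulant matrix everything diagonalizes simultaneously under the Fourier matrix $\bbF$, so the weighted numerical range collapses to an unweighted one and the numerical radius is completely explicit in terms of the symbol $\widehat{\diffq}$. Thus the whole argument is to verify, under the stated CFL condition, that the scheme falls exactly into the scope of the earlier theorem with $\HN=\bbI$.

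First I would observe that $\QEN$ is normal (indeed unitarily diagonalized by $\bbF$), so we may take the symmetrizer $\HN=\bbI$ and work with the unweighted numerical range $W(\QEN)$, with $\const_{\bbH}=1$. By Lemma~\ref{lem:Wcirculant}, $W(\QEN)$ is the convex polytope with vertices $\{\widehat{\diffq}(\nicefrac{2\pi j}{N})\}_{j=1}^N$. The negativity hypothesis $\Real \widehat{\diffq}(\nicefrac{2\pi j}{N})\leq 0$ places all vertices in the closed left half-plane, and since the left half-plane is convex, the entire polytope $W(\QEN)$ lies in $\{z:\Real z\leq 0\}$; this is exactly the statement that $\QEN$ is negative in the sense of \eqref{eq:negative} (with $\HN=\bbI$). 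Second, I would identify the numerical radius: because the eigenvalues coincide with the vertices and the matrix is normal, $r(\QEN)=\max_{1\leq j\leq N}|\widehat{\diffq}(\nicefrac{2\pi j}{N})|$, so the hypothesized CFL condition \eqref{eq:CFL-negative-QE} is precisely $\Delta t\cdot r(\QEN)\leq \CFL_s$.

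With these two identifications in hand, the conclusion is immediate: the $s$-stage method satisfies the imaginary interval condition by assumption, so \eqref{eq:BCsinAs} provides the inscribed semi-disc $B^-_{\CFL_s}\subset\sA_s$, and the CFL condition $\Delta t\cdot r(\QEN)\leq\CFL_s$ together with negativity gives
\[
\Delta t\, W(\QEN)\subset B^-_{\Delta t\cdot r(\QEN)}\subset B^-_{\CFL_s}\subset \sA_s.
\]
Theorem~\ref{thm:stable-RK-methods} (with $\HN=\bbI$, $\const_{\bbH}=1$) then yields the power-boundedness $\|\Ps^n(\DQEN)\|\leq 1+\sqrt{2}$ and hence the stated bound $|\bu_n|_{\ell^2}\leq(1+\sqrt{2})|\bu_0|_{\ell^2}$.

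Honestly, I do not expect a genuine obstacle here, since the proposition is essentially a corollary of Theorem~\ref{thm:stable-RK-methods} specialized to circulants. The only place requiring a touch of care is the second identification: one must confirm that the maximum modulus over the convex polytope $W(\QEN)$ is attained at a vertex, i.e.\ that $r(\QEN)=\max_j|\widehat{\diffq}(\nicefrac{2\pi j}{N})|$. This follows because the modulus $|z|$ is a convex function on $\bbC$, so its maximum over a convex polytope is attained at an extreme point, and the extreme points are among the listed vertices; equivalently one invokes normality, under which the numerical radius equals the spectral radius. I would state this explicitly rather than leave it implicit, as it is the one non-tautological link between the explicit CFL condition \eqref{eq:CFL-negative-QE} and the abstract hypothesis $\Delta t\cdot r_{{}_{\HN}}\!(\LN)\leq\CFL_s$ of the theorem being applied.
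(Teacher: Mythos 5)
Your proof is correct and takes essentially the same route as the paper, which obtains the proposition by applying Theorem~\ref{thm:stable-RK-methods} (with $\HN=\bbI$, $\const_{\bbH}=1$) to the negative circulant $\QEN$, using Lemma~\ref{lem:Wcirculant} to identify $W(\QEN)$ with the polytope spanned by the symbol values. Your explicit verification that $r(\QEN)=\max_{1\leq j\leq N}|\widehat{\diffq}(\nicefrac{2\pi j}{N})|$ (via normality, or convexity of $|z|$ over the polytope) merely fills in a step the paper leaves implicit.
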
 
Propositions \ref{prop:coercive-QE} and \ref{prop:negative-QE} recover  von-Neumann stability analysis for difference schemes with constant coefficients, \cite[\S4.2]{GKO2013}. We shall consider three examples.\newline
\begin{example}[{\bf One-sided differences}] Consider the periodic setup of the one-sided difference \eqref{eq:transport}, 
\[
\bu_{n+1}=\big(\bbI+ \DQEN)\big)\bu_n, \qquad Q(\EN)=\frac{a}{\Delta x} (\EN-{\mathbb I}),
\]
 with spatial  symbol 
$\displaystyle \widehat{\diffq}(\xi)=\frac{a}{\Delta x}(e^{i\xi}-1)$.
This  amounts to the $N\times N$ system 
\[
\bu_{n+1}=\LN\bu_n, \qquad \LN=\left[\begin{array}{ccccc}
1\!-\!\mesh a& \mesh a & 0 & \ldots & 0\\
0 & 1\!-\!\mesh a & \mesh a & \ddots & \vdots\\
\vdots & \ddots & \ddots &  \ddots & 0\\
0 & \ddots & \ddots &  1\!-\!\mesh a & \mesh a\\
\mesh a & 0 & \ldots & 0 & 1\!-\!\mesh a\end{array}\right]_{N\times N}, \quad \mesh=\frac{\Delta t}{\Delta x}.
\]
 Using proposition \ref{prop:coercive-QE} we secure stability under the usual CFL condition $\mesh a\leq 1$,
\[
\mesh a\leq 1 \ \leadsto \ \max_{1\leq j\leq N}\big|1+\mesh a(e^{\nicefrac{2\pi ij}{N}}-1)\big|^2=|1-\mesh a|^2+2|1-\mesh a|\mesh a +(\mesh a)^2\leq 1.
\]
This extends to multi-stage time differencing, RKs, \ $s,3,4$
\[
\bu_{n+1}=\Ps(\DQEN)\bu_n, \qquad \Ps(z)=\sum_{k=0}^s \frac{z^k}{k!}, \ s=3,4.
\]
Clearly, $\Real \widehat{\diffq}\leq0$, and we can appeal to proposition \ref{prop:negative-QE} which secures stability under CFL condition $\mesh a\leq \CFL_s$; indeed,
\[
\mesh a\leq \CFL_s \ \ \leadsto \ \ \mesh a (e^{\nicefrac{2\pi i j}{N}}-1) \in B^-_{\CFL_s}, \quad j=1,2,\ldots, N.
\]
\end{example}
\begin{example}[{\bf Centered differences}] Consider the periodic setup of the centered spatial difference scheme, \eqref{eq:centered}, combined with multi-stage RK time differencing, RKs, \ $s=3,4$,
\[
\bu_{n+1}=\Ps(\DQEN)\bu_n, \qquad Q(\EN)=\frac{a}{2\Delta x} (\EN-\EN^{-1}).
\] 
Spatial differencing has purely imaginary symbol $\displaystyle \widehat{\diffq}(\xi)=\frac{a}{\Delta x}i\sin(\xi)$, and  we invoke proposition \ref{prop:negative-QE} which  secures stability under the CFL condition \eqref{eq:CFL-negative-QE},
\[
\mesh a =  \max_{1\leq j\leq N}\Big|\mesh a i\sin(\nicefrac{2\pi ij}{N}\Big|\leq \CFL_s, \qquad \mesh=\frac{\Delta t}{\Delta x}.
\]
This line of argument extends to higher order centered differences,  \cite[\S5.2]{Tad2002}, e.g., the fourth-order difference
\be\label{eq:5-point-stencil}
Q(\EN)=\frac{a}{12\Delta x} (-\EN^2+8\EN-8\EN^{-1}+\EN^{-2})
\ee
or the fourth-order finite-element difference
\[
Q(\EN)=\left[\begin{array}{ccccc}
\nicefrac{4}{6}& \nicefrac{1}{6} & 0 & \ldots & \nicefrac{1}{6}\\
\nicefrac{1}{6} & \nicefrac{4}{6} & \nicefrac{1}{6} & \ddots & 0\\
0 & \ddots & \ddots &  \ddots & \vdots\\
\vdots & \ddots  & \ddots &  \nicefrac{4}{6} & \nicefrac{1}{6}\\
\nicefrac{1}{6} & 0 & \ldots & \nicefrac{1}{6} & \nicefrac{4}{6}\end{array}\right]^{-1}\hspace*{-0.2cm}\times \ \frac{1}{2\Delta x}\left[\begin{array}{ccccc}
0& 1 & \ldots & \ldots & -1\\
-1 & 0 & 1 & \ddots & \vdots\\
0 & \ddots & \ddots &  \ddots & \vdots\\
\vdots & \ddots & \ddots &  0 & 1\\
1 & 0 & \ldots & -1 & 0\end{array}\right]_{N\times N}.
\] 
\end{example}
\begin{example}[{\bf Lax-Wendroff differencing}] We use the Lax-Wendroff protocol for second-order spatial difference \cite{LW1964} (observe that  the mesh ratio, $\mesh=\nicefrac{\Delta t}{\Delta x}$, is kept fixed),
\[
Q_{{}_{\textnormal{LW}}}(\EN)= \frac{a}{2\Delta x} (\EN-\EN^{-1}) +\frac{\mesh a^2}{2\Delta x}(\EN-2\bbI+\EN^{-1}),
\]
with symbol 
\[
\widehat{\diffq}_{{}_{\textnormal{LW}}}(\xi)=\frac{a}{\Delta x}i\sin(\xi) + \frac{\mesh a^2}{\Delta x}(\cos(\xi)-1).
\]
Stability of the Lax-Wendroff (LW) scheme 
\be\label{eq:LW-scheme}
\bu_{n+1}=\big(\bbI+\Delta t Q_{{}_{\textnormal{LW}}}(\EN)\big)\bu_n
\ee
follows provided  CFL condition \eqref{eq:CFLQ} holds, namely $|1+\Delta t\widehat{\diffq}_{{}_{\textnormal{LW}}}(\nicefrac{2\pi j}{N})|\leq 1$. Noting that 
\[
\widehat{\diffq}_{{}_{\textnormal{LW}}}(\xi) = \frac{2a}{\Delta x}i\sin(\nicefrac{\xi}{2})\cos(\nicefrac{\xi}{2}) - \frac{2\mesh a^2}{\Delta x}\sin^2(\nicefrac{\xi}{2}),
\]
it is a standard argument, e.g., \cite[\S1.2]{GKO2013} to conclude that $\mesh a\leq 1$ secures the desired CFL condition,
\[
 \mesh a\leq 1 \ \leadsto \   \max_{1\leq j\leq N}|1+\Delta t \widehat{\diffq}_{{}_{\textnormal{LW}}}(\nicefrac{2\pi j}{N})|^2\leq 1.
\]
We note that LW differencing has a negative symbol 
$\displaystyle \Real \widehat{\diffq}_{{}_{\textnormal{LW}}}(\xi) \leq 0$,
and therefore theorem \ref{thm:stable-RK-methods} secures the stability of higher-order time discretizations of LW scheme
\[
\bu_{n+1}=\Ps\big(\Delta t Q_{{}_{\textnormal{LW}}}(\EN)\big)\bu_n, \quad s=3,4,
\]
under the relaxed CFL condition, $2\mesh a\leq \CFL_s$. Indeed, 
\[
2\mesh a\leq \CFL_s \ \leadsto \ \max_{1\leq j\leq N}\big|\Delta t\, \widehat{\diffq}_{{}_{\textnormal{LW}}}(\nicefrac{2\pi j}{N})\big|^2 \leq \max_{\xi}\big\{4\big(\mesh a \sin(\nicefrac{\xi}{2})\cos(\nicefrac{\xi}{2})\big)^2 +4 \big(\mesh a \sin(\nicefrac{\xi}{2})\big)^4\big\}\leq \CFL^2_s.
\]
\end{example}

\smallskip
The constant coefficient case in the period setup involves the algebra of circulant matrices,  all of which are uniformly diagonlizable by the Fourier matrix $\bbF$. This is a rather special case, in which  von Neumann spectral stability analysis prevails for arbitrarily large systems. Clearly, the  numerical range-based stability results of sections \ref{sec:range} and \ref{sec:spectral-set} offer a more general framework for studying stability of general non-periodic  cases. Examples are outlined below. 
 
\subsection{Periodic problems. Variable coefficients}\label{sec:periodic-var} We consider the $1i$-periodic problem with $C^2$-variable coefficient $a(\cdot)$
\be\label{eq:yt=axyx}
\left\{\begin{split}
y_t(x,t)&=a(x)y_x(x,t), \quad (t,x)\in {\mathbb R}_+\times [0,1]\\
y(0,t)&=y(1,t).
\end{split}\right.
\ee
The spatial part is discretized using  finite-difference method with $a(x)$-dependent variable coefficients,  $\{\diffq_\alpha(x)\}$,   and acting on a discrete grid, $x_\nu=\nu\Delta x, \, \Delta x=\nicefrac{1}{N}$,
\be\label{eq:variable-coeff}
\ddt y(x_\nu,t)=\diffQ(E)y(x_\nu,t), \quad \nu=0,1,\ldots, N-1, \qquad \diffQ(E):=\frac{1}{\Delta x}\sum_{\alpha=-\ell}^r \diffq_\alpha(x) E^\alpha.
\ee
The accuracy requirement places the restriction $\sum_\alpha \diffq_\alpha(x)=0, \ \sum_\alpha \alpha \diffq_\alpha(x)=a(x)$ and so on.
The difference scheme \eqref{eq:variable-coeff} amounts to an $N\times N$  system of ODEs with `slowly varying' circulancy, that is $Q(x,\EN)_{ij}$ changes smoothly in the sense that  
$|Q(x,\EN)_{i+1,j+1}-Q(x,\EN)_{ij}|$ is bounded independent of $1/\Delta x$.  
\be\label{eq:local-stencil}
\Delta x \sum_\alpha \alpha^2 |\diffq_\alpha(x)|_{C^2}\leq K_{\diffq}.
\ee

Let $\widehat{Q}$ denote the formal symbol associated with \eqref{eq:variable-coeff}
\[
\widehat{Q}(x,\xi):= \frac{1}{\Delta x}\sum_{\alpha=-\ell}^r \diffq_\alpha(x) e^{i\alpha\xi}.
\]
Assume that the symbol is negative 
$\Real \widehat{Q}(x,\xi)\leq 0$.
Then by the sharp G\r{a}rding inequality, \cite[Theorem 1.1]{LN1964}, see also \cite{LW1962}, 
the corresponding difference operator is semi-bounded\footnote{Note that $Q(x,\EN)$ is unbounded, $\|Q(x,\EN)\| = {\mathcal O}(\nicefrac{1}{\Delta x})$.}, namely ---  there exists a constant $\eta>0$ depending of $K_{\diffq}$ but otherwise independent of $N$, such that 
\be\label{eq:Garding}
\Real Q(x,\EN) \leq 2\eta \bbI_{N\times N}.
\ee
Theorem \ref{thm:stable-RK-methods} applies to $Q(x,\EN)-\eta\bbI$, implying its power-boundedness under the CFL condition \eqref{eq:CFL},
\[
\|\Ps^n\big(\Delta t(Q(x,\EN)-\eta\bbI)\big)\|\leq 1+\sqrt{2}, \qquad \Delta t\cdot r\big(Q(x,\EN)\big)\leq \CFL_s.
\]
Next, we note that the shift $-\eta\bbI$ produces only a finite bounded perturbation $B$, namely
\[
\begin{split}
\Ps\big(\Delta t\cdot Q(x,\EN)\big)&= \Ps\big(\Delta t\cdot (Q(x,\EN)-\eta\bbI)+\Delta t\cdot\eta \bbI\big)\\
 & = \Ps\big(\Delta t\cdot (Q(x,\EN)-\eta\bbI)\big) + B, \quad B=\Delta t\cdot \eta \sum_{k=1}^s a_k k \big(\Delta t \cdot Q(x,\EN)\big)^{k-1},
\end{split}
\]
with 
$\displaystyle \|B\|\leq 2\Delta t\cdot \eta K, \  K:=\sum_{k=1}^s |a_k|k \CFL_s^{k-1}$, which in turn implies the stability bound 
\[
|\bu(t_n)|\leq \|\Ps^n\big(\Delta t\cdot Q(x,\EN)\big)\||\bu_0|\leq 
(1+\sqrt{2})e^{\eta K t_n}.
\]
We summarize by stating
\begin{proposition}[{\bf Stability --- finite difference schemes with variable coefficients}]\label{prop:variable}
Consider the  fully-discrete finite difference scheme
\be\label{eq:negative-QE-variable}
\bu_{n+1}=\Ps\big(Q(x,\EN)\big)\bu_n, \ \  n=0,1,2, \ldots, 
\ee
where $\displaystyle  Q(x,\EN)=\frac{1}{\Delta x}\sum_\alpha \diffq_\alpha(x) \EN^\alpha$ is a local  difference operator, \eqref{eq:local-stencil},
and $\Ps$ is an $s$-stage  RK stencil  satisfying the imaginary interval condition,  \eqref{eq:BCsinAs}.
If the spatial symbol is negative, 
\be\label{eq:negative-symbol}
\Real \widehat{Q}(x,\xi)\leq 0, \qquad \widehat{Q}(x,\xi):=\frac{1}{\Delta x}\sum_\alpha \diffq_\alpha(x) e^{i\alpha\xi},
\ee
then the scheme \eqref{eq:negative-QE-variable} is stable under the CFL condition
 \be\label{eq:CFL-negative-QxE}
\max_{\xi}|\Delta t\cdot \widehat{Q}(x,\xi)|\leq \CFL_s,
\ee
 and the following stability bound  holds with $K=\sum_{k=1}^s |a_k|k \CFL_s^{k-1}$,
 \[
 |\bu_n|_{{}_{\ell^2}}\leq (1+\sqrt{2})e^{\eta K t_n}|\bu_0|_{{}_{\ell^2}}, \quad n=1,2,\ldots, \quad \Delta t\cdot r\big(Q(x,\EN)\big)\leq \CFL_s.
 \]
\end{proposition}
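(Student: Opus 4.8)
The plan is to reduce the semi-bounded variable-coefficient operator to the negative case already settled by \thm{thm:stable-RK-methods}, and then to pay for that reduction with a bounded perturbation whose only effect is an exponential-in-time factor. Throughout I work in the unweighted space $\HN=\bbI$ (so $\const_{\bbH}=1$, which is why the bound carries the bare constant $1+\sqrt2$), since the relevant negativity for $y_t=a(x)y_x$ is an $\ell^2$ statement. The two pillars are: (i) a uniform-in-$N$ semi-boundedness estimate that shifts $Q(x,\EN)$ into the negative class while simultaneously controlling its numerical radius by the symbol, and (ii) a polynomial perturbation bound that transfers power-boundedness of the shifted RK amplification operator back to $Q(x,\EN)$ itself.

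First I would establish the semi-boundedness \eqref{eq:Garding}. Because the coefficients $\{\diffq_\alpha(x)\}$ vary, Fourier diagonalization (the device that made \lem{lem:Wcirculant} and the constant-coefficient propositions immediate) is unavailable, so the numerical range of $Q(x,\EN)$ is no longer the symbol polytope. Here the locality hypothesis \eqref{eq:local-stencil} is decisive: the matrix has slowly varying circulancy, and the sharp G\r{a}rding inequality \cite{LN1964} upgrades the pointwise symbol sign condition \eqref{eq:negative-symbol}, $\Real\widehat{Q}(x,\xi)\le0$, into the operator bound $\Real Q(x,\EN)\le 2\eta\bbI$ with $\eta=\eta(K_{\diffq})$ \emph{independent of $N$}. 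Applying the same inequality to the rotated symbols $e^{-i\theta}\widehat{Q}(x,\xi)$ for all $\theta$ controls the entire numerical range, not merely its real part, and yields $r\big(Q(x,\EN)\big)\le\max_{x,\xi}|\widehat{Q}(x,\xi)|+O(\eta)$; hence the accessible symbol-CFL \eqref{eq:CFL-negative-QxE} delivers the operator-level bound $\Delta t\cdot r\big(Q(x,\EN)\big)\le\CFL_s$ (up to an $O(\Delta t\,\eta)$ slack that is harmlessly absorbed) needed below.

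Next I would subtract the drift: by \eqref{eq:Garding} the shifted operator $\widetilde{Q}:=Q(x,\EN)-\eta\bbI$ is negative. Since $\Ps$ satisfies the imaginary interval condition, \thm{thm:stable-RK-methods} applies verbatim to $\widetilde{Q}$ and furnishes the uniform-in-$N$ power bound $\|\Ps^n(\Delta t\,\widetilde{Q})\|\le 1+\sqrt2$ under the CFL condition secured above. It then remains to return from $\widetilde{Q}$ to $Q(x,\EN)$. Expanding the polynomial $\Ps$ by the binomial theorem about the commuting shift $\eta\bbI$ gives $\Ps\big(\Delta t\,Q(x,\EN)\big)=\Ps(\Delta t\,\widetilde{Q})+B$, where $B$ collects the $j\ge1$ terms and is therefore a genuinely bounded low-order perturbation, $\|B\|\le 2\Delta t\cdot\eta K$ with $K=\sum_{k=1}^s|a_k|k\,\CFL_s^{k-1}$.

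Finally I would upgrade the uniform power bound for $\Ps(\Delta t\,\widetilde{Q})$ to the stated exponential bound via the telescoping (Duhamel) identity
\[
F^n=G^n+\sum_{j=0}^{n-1}G^{\,n-1-j}\,B\,F^{\,j},
\qquad
G:=\Ps(\Delta t\,\widetilde{Q}),\quad F:=\Ps\big(\Delta t\,Q(x,\EN)\big),
\]
together with discrete Gr\"onwall: writing $a_n:=\|F^n\|$ and using $\|G^m\|\le1+\sqrt2$ gives $a_n\le(1+\sqrt2)+(1+\sqrt2)\|B\|\sum_{j<n}a_j$, whence $a_n\le(1+\sqrt2)\,e^{c\,\eta K\,t_n}$ with $c$ absolute; this is the claimed bound, the precise constant in the exponent being immaterial for stability on bounded time intervals. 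The main obstacle is step (i): unlike the periodic constant-coefficient case there is no exact spectral picture, so the whole argument hinges on extracting semi-boundedness and numerical-radius control with constants \emph{independent of $N$}. This is exactly where the regularity/locality budget \eqref{eq:local-stencil} and the sharp G\r{a}rding inequality must be spent; everything RK-specific afterwards (\thm{thm:stable-RK-methods} plus the binomial perturbation and Gr\"onwall) is comparatively routine.
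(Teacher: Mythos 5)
Your proof follows the paper's own argument essentially step for step: the sharp G\r{a}rding inequality gives the uniform-in-$N$ semi-bound \eqref{eq:Garding}, the shift $Q(x,\EN)-\eta\bbI$ brings one into the negative class where Theorem~\ref{thm:stable-RK-methods} yields $\|\Ps^n(\Delta t\,\widetilde Q)\|\le 1+\sqrt{2}$, the polynomial (binomial) expansion produces the same perturbation $B$ with $\|B\|\le 2\Delta t\cdot\eta K$, and the perturbation is then converted into the exponential-in-time factor. The only differences are that you spell out two steps the paper leaves implicit --- the rotated-symbol argument connecting the symbol CFL \eqref{eq:CFL-negative-QxE} to the numerical-radius condition $\Delta t\cdot r\big(Q(x,\EN)\big)\le\CFL_s$, and the telescoping/Gr\"onwall estimate behind the bound $(1+\sqrt{2})e^{c\,\eta K t_n}$ (whose absolute constant $c$ in the exponent you correctly note is immaterial, the paper's stated exponent $\eta K t_n$ being loose on the same point).
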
 

\noindent
{\bf Stability of Fourier method}. There are two approaches to handle the stability of difference approximations of problems with variable coefficients: 
the von-Neumann spectral analysis based on sharp G\r{a}rding inequality \eqref{eq:Garding}, or the energy method e.g., \cite[\S2]{Tad1987}; both approaches requires \emph{local} stencils \eqref{eq:local-stencil}.  An alternative approach for stability with variable coefficients  in based on \emph{numerical dissipation}, \cite{Kre1964}.
As an extreme example for using our  RK stability result, we consider the \emph{Fourier method}, \cite[\S4]{KO1972},\cite{GO1977}, which is neither local nor dissipative. Set $\Delta x=\nicefrac{1}{(2N+1)}$ with an odd number of $(2N+1)$ gridpoints. The Fourier method for \eqref{eq:yt=axyx}  amounts to  $(2N\!+\!1)\times (2N\!+\!1)$ system of ODEs
\be\label{eq:Fourier-method}
 \dot{\by}(t)=Q(\DFN)\by(t), \qquad Q(\DFN)=A\DFN, \quad  A=\left[\begin{array}{cccc}a(x_0) & & \\ & a(x_1) & \\ & & \ddots & \\ & & & a(x_{2N})\end{array}\right],
\ee  
where the  diagonal matrix  $A$  encodes $a(x)$ and $\DFN$ is the $(2N\!+\!1)\times (2N\!+\!1)$ Fourier differencing matrix 
\[
\DFN= \bbF\left[\begin{array}{ccccc}-iN & 0 & \ldots & & 0\\
 0 & \hspace*{-0.2cm} -i(N\!-\!1) & 0 & \ddots & \\
 \vdots & & \ddots & \ddots & \vdots\\
 \vdots & & \ddots & \hspace*{-0.2cm} i(N\!-\!1) & 0\\
  0 & \ldots & \ldots & 0 & \hspace*{-0.2cm} iN\\
  \end{array}\right]\bbF^*, \qquad \bbF_{jk}=\Big\{\frac{e^{ijk \Delta x }}{\sqrt{2N+1}}\Big\}_{j,k=1}^{2N+1}.
\]
The Fourier difference method is neither local,  $\ds (\DFN)_{jk}= \frac{(-1)^{j-k}}{2\sin(\nicefrac{(k-j)\Delta x}{2})}$ fails \eqref{eq:local-stencil}, nor dissipative, and  the  method is \underline{unstable} in presence of  variable coefficients, \cite{GHT1994}. However, there is a different weighted-stability. Specifically --- for the prototypical case $a(x)=\sin (x)$, there exists  a symmetrizer $\HN$ such that  \cite[Theorem 2.1]{GHT1994}
\[
 Q(\DFN)^\top\HN + \HN Q(\DFN) \leq \HN,
 \]
where the $\HN$-norm  corresponds to the $H^1$-norm 
 \[
 |\bu|^2_{\HN}=|\bu|^2_{H^1}, \qquad |\bu|^2_{H^s}:=\sum_{k=-N}^N |(1+k^2)^{\frac{s}{2}}|\widehat{u}_k|^2.
 \]
 \begin{proposition}[{\bf Stability --- Fourier method}]\label{prop:Fourier}
Consider the time discretization of the Fourier method, 
\[
 \dot{\by}(t)=Q(\DFN)\by(t), \qquad Q(\DFN)=A\DFN, \quad  A=\left[\begin{array}{cccc}\sin(x_0) & & \\ & \sin(x_1) & \\ & & \ddots & \\ & & & \sin(x_{2N})\end{array}\right],
\]
using RK methods which satisfy the imaginary interval condition, 
 \[
 \bu_{n+1}=\Ps\big(\Delta t\cdot Q(\DFN)\big)\bu_n, \quad n=1,2,\ldots, \qquad \Delta t\cdot  N\leq \CFL_s.
 \]
The Fourier method is $H^1$-stable 
\be\label{eq:H1-Fourier}
|\bu_n|_{\HN} \leq (1+\sqrt{2})e^{t_n/2}|\bu_0|_{\HN}.
\ee
\end{proposition}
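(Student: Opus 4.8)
The plan is to reduce Proposition~\ref{prop:Fourier} to \thm{thm:stable-RK-methods} by the same shift-by-$\eta$ device used for the variable-coefficient \prop{prop:variable}, the only genuinely new ingredient being that the semi-boundedness must now be supplied by the GHT symmetrizer rather than by the sharp G\r{a}rding inequality (the latter being unavailable, since the Fourier operator $Q(\DFN)=A\DFN$ is nonlocal). Concretely, I would start from the cited bound \cite{GHT1994},
\[
Q(\DFN)^\top \HN + \HN Q(\DFN) \leq \HN, \qquad |\bu|_{\HN}=|\bu|_{H^1},
\]
which says that $Q(\DFN)$ is semi-bounded with $\eta=\tfrac12$ in the weighted inner product $\langle\cdot,\cdot\rangle_{\HN}$. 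Setting $M:=Q(\DFN)-\tfrac12\bbI$ then gives $M^\top\HN+\HN M\leq 0$, so $M$ is \emph{negative} in the $\HN$-weighted sense and its weighted numerical range lies in the closed left half-plane, $\Real\, W_{\HN}(M)\leq 0$.

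Next I would verify the CFL hypothesis of \thm{thm:stable-RK-methods} for $M$. Since $\DFN=\bbF\,\mathrm{diag}(ik)\,\bbF^*$ commutes with the Fourier-diagonal symmetrizer $\HN=\mathrm{diag}(1+k^2)$ and $\|\DFN\|=N$, one bounds the weighted numerical radius by $r_{\HN}(M)\leq \|M\|_{\HN}\leq \|A\|_{\HN}\,N+\tfrac12$, where $\|A\|_{\HN}$ is the finite, $N$-independent $H^1\!\to\!H^1$ operator norm of multiplication by $\sin(x)$. Thus the stated restriction $\Delta t\cdot N\leq \CFL_s$ (up to the fixed constant $\|A\|_{\HN}$) yields $\Delta t\cdot r_{\HN}(M)\leq\CFL_s$, and \eqref{eq:BCsinAs} gives the inclusion $\Delta t\,W_{\HN}(M)\subset B^-_{\CFL_s}\subset\sA_s$ required by the theorem.

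With this in hand, the weighted form of \thm{thm:stable-RK-schemes} --- that is, the Crouzeix--Palencia bound \eqref{eq:Cr} applied with $f=\Ps^n$ and $A=\Delta t M$ in the $H^1$ inner product --- delivers
\[
\|\Ps^n(\Delta t M)\|_{\HN}\leq (1+\sqrt2)\max_{z\in\Delta t\,W_{\HN}(M)}|\Ps^n(z)|\leq 1+\sqrt2 .
\]
The crucial point I would stress is that the spectral-set constant $1+\sqrt2$ is independent of both the degree $sn$ and the dimension, and in particular does not involve any conditioning constant $\const_{\HN}$; this is exactly why the badly conditioned $H^1$ symmetrizer $\HN=\mathrm{diag}(1+k^2)$ does no harm \emph{as long as one measures everything in the $\HN$-norm} and never converts back to $\ell^2$. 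Finally I would undo the shift as in \prop{prop:variable}: writing $\Ps(\Delta t Q)=\Ps(\Delta t M)+B$, the scalar shift $\tfrac{\Delta t}{2}\bbI$ produces the exponential factor at the rate $\eta=\tfrac12$ inherited from the continuous energy estimate $\tfrac{d}{dt}|\by|^2_{\HN}\leq|\by|^2_{\HN}$, leading to $\|\Ps^n(\Delta t Q)\|_{\HN}\leq(1+\sqrt2)e^{t_n/2}$, which is \eqref{eq:H1-Fourier}.

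I would expect the delicate step to be the bookkeeping that turns the scalar shift into the clean rate $e^{t_n/2}$: the naive perturbation bound $(1+\|B\|_{\HN})^n$ only controls a \emph{product} of factors, whereas $\Ps(\Delta t M)$ is merely power-bounded and not a contraction, so one must argue more carefully --- most cleanly by applying \eqref{eq:Cr} directly to the right-shifted region $\Delta t\,W_{\HN}(Q)\subset\{\Real z\leq \tfrac{\Delta t}{2}\}\cap B_{\CFL_s+\Delta t/2}(0)$ and estimating $\max_{z}|\Ps^n(z)|$ there by $\big(1+\tfrac{\Delta t}{2}+\mathcal{O}(\Delta t^2)\big)^n\leq e^{t_n/2}$. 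Establishing this last growth estimate for $\Ps^n$ on a thin rightward translate of $\sA_s$, uniformly in $n$, is the main technical obstacle; the remainder is routine assembly of \thm{thm:stable-RK-methods}, the GHT symmetrizer, and the weight-independence of the Crouzeix--Palencia constant.
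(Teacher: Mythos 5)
Your proposal follows essentially the same route as the paper's (implicit) proof: the GHT symmetrizer supplies the weighted semi-bound $Q(\DFN)^\top\HN+\HN Q(\DFN)\leq \HN$, the shift $M=Q(\DFN)-\tfrac12\bbI$ is then $\HN$-negative, the weighted Crouzeix--Palencia bound \eqref{eq:Cr} gives $\|\Ps^n(\Delta t M)\|_{\HN}\leq 1+\sqrt2$ under $\Delta t\cdot N\lesssim\CFL_s$, and the shift is undone at the price of the factor $e^{t_n/2}$ --- exactly the scheme used for Proposition~\ref{prop:variable} and invoked for Proposition~\ref{prop:Fourier}; you also correctly identify the key point that the constant $1+\sqrt2$ is weight-independent, so the ill-conditioned symmetrizer ($N^{-2}\bbI\leq\HN\leq 4\bbI$) forces one to state stability in the $\HN$-norm only, as the paper does. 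The one place you deviate is the final step: the paper removes the shift via the additive perturbation $\Ps(\Delta t\,Q)=\Ps(\Delta t\,M)+B$ with $\|B\|\leq 2\Delta t\,\eta K$, whereas you apply \eqref{eq:Cr} directly to the translated region $\Delta t\,W_{\HN}(Q)$. Your variant is defensible and in fact more careful: as you note, $\Ps(\Delta t\,M)$ is only power-bounded, not contractive, so the paper's perturbation bookkeeping really yields a rate $e^{ct_n}$ with $c$ depending on $K=\sum_k|a_k|k\,\CFL_s^{k-1}$ (compare the bound $e^{\eta K t_n}$ stated in Proposition~\ref{prop:variable}), and the clean exponent $t_n/2$ in \eqref{eq:H1-Fourier} involves the same looseness in constants that your translated-region argument would have to absorb; either way the substantive content --- uniform-in-$N$ $H^1$-stability with constant $1+\sqrt2$ --- is correctly established.
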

We note that the symmetrizer  $\HN$  is not uniformly bounded from below,
$N^{-2}\bbI\leq \HN\leq 4\bbI$, so $\ell^2$-stability fails. 
 Converted to $\ell^2$-framework, \eqref{eq:H1-Fourier} yields
\[
|\bu_n|_{\ell^2} \leq N|\bu_n|_{\HN} \leq N(1+\sqrt{2})|e^{t_n/2}|\bu_0|_{\HN}=2N(1+\sqrt{2})|e^{t_n/2}|\bu_0|_{\ell^2}.
\]

%%%%%%%%%%%%%%%%%%%%%%%%%%%%%
\subsection{Initial-boundary value problems}\label{sec:IBVP}
We consider the  problem \eqref{eq:yt=ayx} in the the strip 
\[
\left\{\begin{split}
y_t(x,t)&=ay_x(x,t), \ \  a>0, \qquad (t,x)\in \bbR_+\times [0,1]\\
y(1,t)&=0.
 \end{split}
 \right.
 \]
A general stability theory for difference approximations of  initial-boundary value problems was developed in \cite{Kre1968,GKS1972}. It is based on normal mode analysis and secures the resolvent-type  stability of such approximations.
The following example shows how to utilize the framework offered in theorem  \ref{thm:stable-RK-schemes}, to study the stability of difference approximations of  initial-boundary value problems.  
\begin{example}[{\bf One-sided difference}]\label{exm:IBVP-a} Consider an interior  centered differencing augmented with one-sided difference at the outflow boundary $x=0$,
\be\label{eq:IBVP-1st-order}
\left\{\begin{split}
\ddt y(x_0,t)& = a\, \frac{y(x_1,t)-y(x_0,t)}{\Delta x} \\
\ddt y(x_\nu,t)&=  a\, \frac{y(x_{\nu+1},t)-y(x_{\nu-1},t)}{2\Delta x}, \qquad  \nu=1, 2, \ldots, N-1\\
 y(x_N,t)&=0.
\end{split}\right.
\ee
We emphasize that we treat the semi-infinite problem, which  
 amounts to method of lines  for the $N$-vector of unknowns, $\by(t):=\big(y(x_0,t), y(x_1,t), \ldots, y(x_{N-1},t)\big)^\top$, governed by the semi-discrete system $($``method of lines''$)$
\be\label{eq:IBVP-transport-a}
\dot{\by}(t)=\LN \by(t), \qquad \LN=\frac{a}{\Delta x}\left[\begin{array}{cccccc}
 -1& 1 & 0 & \ldots & \ldots &  \ldots \\
-\nicefrac{1}{2} & 0 & \nicefrac{1}{2} & \ddots & \ddots & \vdots\\
0 & \ddots & \ddots &  \ddots & &  \vdots\\
\vdots & \ddots & -\nicefrac{1}{2} &  \ddots & \ddots & 0 \\
 \vdots & \ldots  & \ddots & -\nicefrac{1}{2} & 0 &\nicefrac{1}{2} \\
\ldots  & \ldots & \ldots & 0 & -\nicefrac{1}{2}& 0
 \end{array}\right].
\ee
Although the matrix $\LN$ is not negative, $\displaystyle \LN^\top+\LN = \frac{a}{\Delta x}\left[\begin{array}{cc}-2 & \nicefrac{1}{2}  \\ \nicefrac{1}{2} & 0  \end{array}\right]
\oplus \ \, {}^{\bigzero} \hspace*{0.23cm}  {}_{{}_{(N-2)\times (N-2)}}$, 
it is weighted negative with the  simple symmetrizer $\HN$:
\[
\LN^\top\HN+\HN\LN = \frac{a}{\Delta x}\left[\begin{array}{rr}-1 & 0  \\ 0 & 0  \end{array}\right]
\oplus \ \, {}^{\bigzero} \hspace*{0.23cm}  {}_{{}_{(N-2)\times (N-2)}}
 \leq 0, \qquad \HN:=\left[\begin{array}{cc}
 \nicefrac{1}{2}& 0 \\
0 & 1  \end{array}\right]\oplus \bbI_{{}_{(N-2)\times (N-2)}}.
\]
\newline 
\ifx%%%%%%%
Alternatively, we may augment the interior centered difference scheme with second-order accurate one-sided difference at the outflow boundary, $x=0$,
\be\label{eq:IBVP-2nd-order}
\left\{\begin{split}
\ddt y(x_0,t)& = a\, \frac{-y(x_2,t)+4y(x_1,t)-3y(x_0,t)}{2\Delta x} \\
\ddt y(x_\nu,t)&=  a\, \frac{y(x_{\nu+1},t)-y(x_{\nu-1},t)}{2\Delta x}, \qquad  \nu=1, 2, \ldots, N-1, \\
 y(x_N,t)&=0.
\end{split}\right.
\ee
This  amounts to 
\be\label{eq:IBVP-transport-b}
\dot{\by}(t)=\LN \by(t), \quad \LN=\frac{a}{2\Delta x}\left[\begin{array}{cccccc}
 -3& 4 & -1 & 0 & \ldots & \ldots\\
-1 & 0 & 1 & 0 & & \vdots\\
0 & \ddots & \ddots &  \ddots & \ddots & \vdots\\
\vdots & \ldots & -1 &  \ddots & \ddots & 0 \\
\vdots  & \ldots & \ldots & \ddots & \ddots & 1\\
\ldots & \ldots & \ldots & 0 & -1 & 0
 \end{array}\right].
 \ee
Again, $\LN$ is not negative, $\displaystyle 
\LN^\top+\LN= 
\frac{a}{2\Delta x}\left[\begin{array}{rrr}-6 & 3 & -1 \\ 3 & 0  &0\\
-1 & 0 & 0 \end{array}\right]
\oplus \  {}^{\bigzero} \hspace*{0.23cm}{}_{{}_{(N-3)\times (N-3)}}$,
 but there is a simple symmerizer, $\displaystyle \bbH:=\left[\begin{array}{ccc}
 \nicefrac{1}{4}& 0 & 0 \\
0 & 1 & 0 \\ 0 & 0 & 1 \end{array}\right]\oplus \bbI_{(N-3)\times (N-3)}$,
 which makes it weighted negative,
\[
\LN^\top\HN+\HN\LN = 
\frac{a}{2\Delta x}\left[\begin{array}{rrr}-\nicefrac{3}{2} & 0 & -\nicefrac{1}{4} \\ 0 & 0  &0\\
-\nicefrac{1}{4} & 0 & 0 \end{array}\right]
\oplus \ \ {}^{\bigzero} \hspace*{0.23cm}  {}_{{}_{(N-3)\times (N-3)}} 
\leq 0.
\]
\fi%%%%%%%
Using theorem \ref{thm:stable-RK-methods}, we conclude the stability of  time discretization of \eqref{eq:IBVP-transport-a}  using any RK  method satisfying the imaginary interval condition, \eqref{eqs:RK-IIC}. In particular,   the fully-discrete schemes based on the $s$-stage RK time discretization
\[
\bu_{n+1}=\Ps(\DbbL)\bu_n, \quad s=3,4, \qquad n=1,2,\ldots, 
\]
are stable under the CFL condition $\Delta t\cdot r_{{}_{\HN}}\!\!(\LN)\leq \CFL_s$,
\[
|\bu(t_n)|\leq 4(1+\sqrt{2})|\bu_0|.
\]
Observing  the simple bound, $\displaystyle r_{{}_{\HN}}\!\!(\LN)\leq\frac{a}{\Delta x}\const_{\bbH}$ with $\const_\bbH=2$, we end with CFL
condition sufficient for stability, $\mesh a \leq \CFL_s/2$.
\end{example}

 The last example depends on verifying weighted negativity, $\LN^\top\HN+\HN\LN\leq 0$, which requires the construction of a proper symmetrizer on a case by case basis.   A systematic approach for studying the weighted negativity for properly designed boundary  treatment augmenting centered difference schemes was developed in \cite{KS1974, Str1994, Gus1998, BEF2010}.
  To extend our RK stability framework  to  larger classes of difference approximations of initial-boundary values problems  requires   a more precise characterization of  the \emph{weighted} numerical range of Teoplitz-like spatial discretizations. This is left for future study.

\appendix

\section{The numerical range is $(1+\sqrt{2})$-spectral set}
In his remarkable work \cite{Cro2007}, Crouzeix proved that $W_H(A)$ is a $K$-numerical set with $K=11.08$ which was  later improved by Crouzeix \& Palencia to $K=1+\sqrt{2}$. We quote here the elegant proof of Ransford \&  Schwenninger \cite{RS2018} for Crouzeix \& Palencia $(1+\sqrt{2})$-bound, based on the following  lemma. In particular, we refer to the  recent review  \cite{SdV2023}. 
\begin{lemma}[{Ransford \&  Schwenninger $(1+\sqrt{2})$-spectral set}]
Let $T$ be a Hilbert space bounded  operator  $\|T\|< \infty$,   and let 
 $\Omega$ be a bounded
open set containing the spectrum of $T$. Suppose that for each  $f$ analytic on $\Omega$,  there exists an analytic $g$ on $\Omega$ such that
the following holds  $($here and below, $\|f\|_\Omega:= \sup_\Omega|f|$$)$:
\begin{equation}\label{eq:RS}
 \|g\|_{\Omega} \leq \|f\|_{\Omega} \ \ \textnormal{and} \ \ 
\|f(T) + g(T)^*\| \leq 2\|f\|_\Omega.
\ee
Then
\[
 \|f(T)\|\leq (1+\sqrt{2})\|f\|_\Omega
\]
\end{lemma}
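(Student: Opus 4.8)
The plan is to reduce to the normalized case and then extract a single scalar quadratic inequality for $\|f(T)\|$ whose relevant root is $1+\sqrt2$. By homogeneity in $f$ I may assume $\|f\|_\Omega\le 1$; the hypothesis \eqref{eq:RS} then supplies an analytic $g$ with $\|g\|_\Omega\le 1$ and $\|f(T)+g(T)^*\|\le 2$. Writing $S:=f(T)+g(T)^*$, so that $\|S\|\le 2$ and $f(T)=S-g(T)^*$, my target is the estimate $\|f(T)\|^2\le 2\|f(T)\|+\|f\|_\Omega^2$, since the positive root of $c^2-2c-1=0$ is exactly $1+\sqrt2$.

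The mechanism I would use is the $C^*$-identity $\|f(T)\|^2=\|f(T)^*f(T)\|$ together with the commutativity of the holomorphic functional calculus. Substituting $f(T)^*=S^*-g(T)$ gives $f(T)^*f(T)=S^*f(T)-g(T)f(T)$. The crucial structural point is that $g(T)f(T)=(gf)(T)$ is again a single function of $T$, associated with the analytic symbol $gf$ whose sup-norm obeys $\|gf\|_\Omega\le\|g\|_\Omega\|f\|_\Omega\le\|f\|_\Omega^2$. Taking norms and using $\|S\|\le 2$ then yields the core inequality $\|f(T)\|^2\le 2\|f(T)\|+\|(gf)(T)\|$.

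The hard part is the final bound on the product term $\|(gf)(T)\|$. The naive estimate $\|(gf)(T)\|\le K\max_{\Omega}|gf|$ only feeds back the very spectral constant $K$ one is trying to bound and, after maximizing over $f$, produces a strictly weaker value; so the sharp constant is won precisely by upgrading this to $\|(gf)(T)\|\le\|f\|_\Omega^2$. This is where the finer properties of the $g$ furnished by the hypothesis must be exploited: in the concrete construction behind \eqref{eq:RS}, $g$ is the anti-holomorphic (Cauchy) dual of $f$, chosen so that $\overline g$ agrees with $f$ on $\partial\Omega$ and the product $gf$ inherits the positivity that controls $\|(gf)(T)\|$ by $\|f\|_\Omega^2$. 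Granting this, the core inequality closes to $\|f(T)\|^2\le 2\|f(T)\|+\|f\|_\Omega^2$, and solving the resulting quadratic delivers $\|f(T)\|\le(1+\sqrt2)\|f\|_\Omega$, as claimed.
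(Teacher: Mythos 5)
Your reduction and the identity $f(T)^*f(T)=S^*f(T)-(gf)(T)$ are fine, but the proof has a genuine gap exactly where you flagged the "hard part": the estimate $\|(gf)(T)\|\leq\|f\|_\Omega^2$ is neither proved nor provable from the hypotheses of the lemma. The lemma is abstract: the \emph{only} information available about $g$ is \eqref{eq:RS}, namely $\|g\|_\Omega\leq\|f\|_\Omega$ and $\|f(T)+g(T)^*\|\leq 2\|f\|_\Omega$. A bound of the form $\|(gf)(T)\|\leq\|gf\|_\Omega$ is a von Neumann--type inequality with constant $1$ for the particular function $gf$; it is \emph{stronger} than the conclusion you are trying to establish (constant $1+\sqrt{2}$), so invoking it is circular. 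Your appeal to "finer properties" of the Cauchy-transform construction steps outside the lemma entirely, and even in that concrete setting no such bound is available --- the positivity structure there controls $f(T)+g(T)^*$, not the product $(gf)(T)$. As you yourself computed, the only legitimate abstract bound is $\|(gf)(T)\|\leq K\|gf\|_\Omega\leq K$ with $K:=\sup_{\|h\|_\Omega=1}\|h(T)\|$, and feeding that into your quadratic inequality gives $K^2\leq 2K+K$, i.e.\ $K\leq 3$, not $1+\sqrt{2}$.

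The paper (following Ransford--Schwenninger) resolves this differently: rather than trying to improve the bound on the product term, it keeps the self-referential bound $\|(fgf)(T)\|\leq K$ but works at \emph{fourth} order. Starting from
\[
f(T)f(T)^*f(T)f(T)^* = f(T)\bigl(f(T)+g(T)^*\bigr)^*f(T)f(T)^* - (fgf)(T)f(T)^*,
\]
the $C^*$-identity gives $\|f(T)\|^4$ on the left, while the right-hand side is at most $2K^3+K^2$. The point you missed is that by raising the power, the term that forces you to re-use $K$ enters only at order $K^2$, which is lower order relative to $K^4$; the resulting inequality $K^4\leq 2K^3+K^2$ divides down to $K^2\leq 2K+1$ and yields $K\leq 1+\sqrt{2}$ sharply. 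So the sharp constant is won not by a better estimate on $(gf)(T)$ (which does not exist) but by arranging the algebra so that the crude estimate is harmless.
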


\noindent
Proof. Let $\displaystyle K:=\sup_{\|f\|_\Omega=1}\|f(T)\|$. By assumption, for each $f, \ \|f\|_\Omega\leq 1$, there  exists $g$ such that \eqref{eq:RS} holds. Ransford \& Schwenninger invoked  the identity
\[
f(T)f(T)^*f(T)f(T)^* \equiv f(T)\big(f(T) + g(T)^*\big)^*f(T)f(T)^* - (fgf)(T)f(T)^*.
\]
A simple exercise shows that the norm of the quantity on the left equals
$\|f(T)\|^4$.
Since by \eqref{eq:RS}${}_1$, $\|(fgf)\|_\Omega \leq 1$ hence $\|fgf(T)\|\leq K$, and since by \eqref{eq:RS}${}_2$, $\|f(T) + g(T)^*\|\leq 2$, then  the expression on the right does not exceed
\[
\begin{split}
\|f(T)\|^4 & =\|f(T)f(T)^*f(T)f(T)^*\| \\
 & \leq \|f(T)\|\|f(T) + g(T)^*\| \|f(T)\|\|f(T)^*\| + \|(fgf)(T)\|\|f(T)^*\| \leq 2K^3+ K^2.
 \end{split}
\]
Hence, $\displaystyle K^4=\sup_{\|f\|_\Omega=1}\|f(T)\|^4\leq 2K^3+ K^2$
which implies $K\leq 1+\sqrt{2}$.\hfill $\square$\newline
Note that the lemma does not involve the numerical range of $T$ --- this comes into play in the construction  of $g=g_{{}_\Omega}$ satisfying \eqref{eq:RS}, in terms of Cauchy transform,
\[
g_{{}_\Omega}(z):=\frac{1}{2\pi i}\int_{\partial \Omega}\frac{\overline {f(\zeta)}}{\zeta-z}\rd{\zeta}, \quad z\in \Omega.
\]
The main thrust of the work, originated in \cite{vN1951} and  then developed in
\cite{Del1999} \cite{Cro2007} and finally \cite{CP2017}, is 
 to show that such $g_{{}_\Omega}$ with $\Omega=W_H(T)$ satisfies \eqref{eq:RS}. 

\bibliographystyle{plain}

\begin{thebibliography}{99}

\bibitem[AAJ2023]{AAJ2023} F. Achleitner, A. Arnold, A. J\"{u}ngel,
Necessary and sufficient conditions for strong stability of explicit Runge-Kutta methods, ArXiv:2308.05689, 2023.

\bibitem[BEF2010]{BEF2010} Erik Burman, Alexandre Ern, and Miguel A. Fernández,
Explicit Runge–Kutta Schemes and Finite Elements with Symmetric Stabilization for First-Order Linear PDE Systems, SINUM  48 (6) (2010) 2019-2042.

\bibitem[But2008]{But2008} Butcher, John C.,
Numerical Methods for Ordinary Differential Equations, 2008,  John Wiley \& Sons, ISBN 978-0-470-72335-7.

\bibitem[CRBD2018]{CRBD2018} R. T. Q. Chen, Y. Rubanova, J. Bettencourt, D. Duvenaud, Neural Ordinary Differential Equations, Advances in neural information processing systems 31 (2018).


\bibitem[CFL1928]{CFL1928} R. Courant, K. Friedrichs, and H. Lewy,  On the partial difference equations of mathematical physics, Mathematische Annalen 100, 32-74 (1928), IBM Journal of Research and Development, 11 (2): 215-234,  doi:10.1147/rd.112.0215. A freely downloadable copy can be found \href{https://web.stanford.edu/class/cme324/classics/courant-friedrichs-lewy.pdf}{here}.

\bibitem[Cro2007]{Cro2007} M. Crouzeix, Numerical range and functional calculus in Hilbert space, J. Funct. Anal., 244 (2007), pp. 668-690.

\bibitem[CP2017]{CP2017} M. Crouzeix and C. Palencia, The numerical range is a $(1+\sqrt{2})$-spectral set, SIAM J. Matrix
Anal. Appl., 38 (2017), pp. 649-655.

\bibitem[CG2019]{CG2019} M. Crouzeix and A. Greenbaum, Spectral sets: Numerical range and beyond, SIAM J. Matrix Anal. Appl 40(3) 2019, pp. 1087-1101

\bibitem[DB1974]{DB1974} Germund Dahlquist  and \r{A}ke Bj\"{o}rck, Numerical Methods. Prentgice-Hall Series in Automatic Computation, 1973.

\bibitem[Del1999]{Del1999}  B. Delyon \& F. Delyon, Generalization of Von Neumann’s spectral sets and integral representation of operators, Bull. Soc. Math. France, 127 (1999), pp. 25-41.

\bibitem[DP1980]{DP1980} J. R. Dormand \& P. J. Prince,
A family of embedded Runge-Kutta formulae,
J. Computational and Appl. Math. 6(1) (1980), 19-26.

\bibitem[E2017]{E2017} Weinan E, 
A proposal on machine learning via dynamical
systems, Commun. Math. Stat. 5 (2017) 1-11.

\bibitem[Fog1964]{Fog1964} S. R. Foguel, A counterexample to a problem of Sz-Nagy, Proc. AMS, 15, 1964, 788-790.

\bibitem[FT1984]{FT1984} S. Friedland \& E. Tadmor,
Optimality of the Lax-Wendroff condition
Linear Algebra and its Applications 56, 1984, 121-129.

%\bibitem[GT1978]{GT1978} M. Goldberg \& E. Tadmor,
%Scheme-independent stability criteria for difference approximations of hyperbolic initial-boundary value problems. I., Math.  Comp. 32 (1978), 1097-1107; and II., Math. Comp 36 (1981), 603-626.

\bibitem[GT1982]{GT1982} M. Goldberg \& E. Tadmor,
On the numerical radius and its applications
Linear Algebra and its Applications 42, 1982, 263-284.

%\bibitem[GT1985]{GT1985} M. Goldberg \& E. Tadmor,
%Convenient stability criteria for difference approximations of hyperbolic initial-boundary value problems
%Mathematics of Computation 44 (1985), 361-377.

\bibitem[GHT1994]{GHT1994} J. Goodman, T.  Hou and E. Tadmor, On the stability of the unsmoothed Fourier method for hyperbolic equations, Numerische Mathematik 67(1) (1994), 93-129.

\bibitem[GO1977]{GO1977} D, Gottlieb  and S. A. Orszag, Numerical analysis of spectral methods: theory and applications. SIAM, 1977.

\bibitem[GST2001]{GST2001} S. Gottlieb, C.-W. Shu \& E. Tadmor,
Strong stability preserving high order time discretization methods
SIAM Review 43 (2001) 89-112.

\bibitem[Gus1998]{Gus1998} B. Gustafsson, On the implementation of boundary conditions for the method of lines
BIT 38, 1998, 293-314.

\bibitem[GKS1972]{GKS1972} B. Gustafsson, H-O. Kreiss, and A. Sundstr\"{o}m, Stability theory of difference approximations for mixed initial boundary value problems. II. Math.  Comp. 26.119 (1972) 649-686.

\bibitem[GKO2013]{GKO2013} B. Gustafsson, H-O. Kreiss and J. Oliger, Time dependent problems and difference methods.  Wiley; 2013.

\bibitem[HR2017]{HR2017} Eldad Haber and Lars Ruthotto, Stable architectures for deep neural networks. Inverse problems, 34(1), 2017.

\bibitem[HNW1993]{HNW1993} E. Hairer, S.  N{\o}rsett and G.  Wanner, 
 Solving ordinary differential equations I: Nonstiff problems, 1993, 
 Springer-Verlag, ISBN 978-3-540-56670-0.

\bibitem[Hal1967]{Hal1967} P. R. Halmos, A Hilbert Space Problem Book, Van Nostrand, New York, 1967.

\bibitem[Hen1962]{Hen1962} P. Henrici, Bounds for iterates, inverses, spectral variation and fields of values of non-normal matrices, Numer. Math. 4, 1962, 24-40.

\bibitem[Hes2017]{Hes2017} Jan S. Hesthaven,  Numerical methods for conservation laws: From analysis to algorithms. Society for Industrial and Applied Mathematics, 2017.

\bibitem[Ise1996]{Ise1996} A. Iserles, 
A First Course in the Numerical Analysis of Differential Equations, 1996,
Cambridge University Press, ISBN 978-0-521-55655-2.

\bibitem[Kat1995]{Kat1995} T. Kato, Perturbation Theory for Linear Operators, Classic in Mathematics, Springer, 1995.
 
\bibitem[Kre1962]{Kre1962} Heinz-Otto Kreiss, ober die Stabilitiilsdenition fur Djfirenzengleichungen die partielle Difirential-pleichrongen upproximieren, BIT,  2,1962, 153-181.

\bibitem[Kre1964]{Kre1964} Heinz‐Otto Kreiss, On difference approximations of the dissipative type for hyperbolic differential equations. Comm. Pure  Applied Math.  17(3) (1964), 335-353.

\bibitem[Kre1968]{Kre1968} Heinz-Otto Kreiss Stability theory for difference approximations of mixed initial boundary value problems. I. Mathematics of Computation 22(104) (1968) 703-714.

\bibitem[KO1972]{KO1972} Heinz-Otto Kreiss and Joseph Oliger, Comparison of accurate methods for the integration of hyperbolic equations,  Tellus 24 (3) (1972), 199-215.

\bibitem[KS1974]{KS1974} H.-O. Kreiss and G. Scherer, Finite element and finite difference methods for hyperbolic
partial differential equations, in Mathematical Aspects of Finite Elements
in Partial Differential Equations, C. de Boor, ed., Academic Press, New York, 1974.

\bibitem[KS1992]{KS1992} Heinz-Otto Kreiss and Godela Scherer,
Method of lines for hyperbolic differential equations,
SINUM 29(3), 1992, 640-646.


\bibitem[KW1993]{KW1993} Heinz-Otto Kreiss and Lixing Wu,   
On the stability definition of difference approximations for the initial boundary value problem,
Appl. Numerical Math., 12  (1-3) 1993, 213-227.

\bibitem[LN1964]{LN1964} P. D. Lax  and L. Nirenberg. On stability for difference schemes; a sharp form of G\r{a}rding's inequality,  Comm. Pure  Applied Math. 19(4) (1966), 473-492.

\bibitem[LR1956]{LR1956} P. D. Lax, P. D. and R. D. Richtmyer, Survey of stability of linear finite difference equations, CPAM  9, 1956, 267-293.

\bibitem[LW1962]{LW1962} P. D. Lax and B. Wendroff, On the Stability of Difference Schemes, Comm. Pure Appl. Math. 15 (1962) 363-371

\bibitem[LW1964]{LW1964} P. D. Lax and B. Wendroff, Difference schemes with high order of accuracy for solving
hyperbolic equations, Comm. Pure Appl. Math. 17(3) (1964) 381-392.

\bibitem[Lev2007]{LeV2007} Randall J. LeVeque,  Finite difference methods for ordinary and partial differential equations: steady-state and time-dependent problems. Society for Industrial and Applied Mathematics, 2007.

\bibitem[LT1984]{LT1984} R. LeVeque and N. Trefethen, On the resolvent condition in the Kreiss Matrix Theorem, BIT 24 (1984) 584-591.

\bibitem[LT1998]{LT1998} D. Levy \& E. Tadmor, 
From semi-discrete to fully-discrete: stability of Runge-Kutta schemes by the energy method, SIAM Review 40 (1998), 40-73.

\bibitem[MS1965]{MS1965}  C. A. McCarthy and J. Schwartz, On the norm of a finite Boolean algebra of projections and
applications to theorems of Kreiss and Morton, Comm. Pure Appl. Math. 18 (1965),  191-201

\bibitem[Mis2018]{Mis2018} S. Mishra, A machine learning framework for data driven acceleration of computations of differential equations, Math. Engineering
2019, 1(1), (2018) 118-146. 

\bibitem[vN1951]{vN1951} J. von Neumann, Erne Spektraltheorie fur allgememe Operatoren ernes unitaren Raumes, Math. Nachrichten, t. 4, 1951, p. 258-281.

\bibitem[OA1975]{OA1975} K. Okubo \& T. Ando, Constants related to operators of class $C_\rho$, manuscripta math. 16, 1975, 385-394.



\bibitem[Pea1966]{Pea1966} C. Pearcy, 
An elementary proof of the power inequality for the numerical radius,
Mich. Math. J. 13 (1966) 289-291.

\bibitem[Ran2021]{Ran2021} Hendrik Ranocha, On strong stability of explicit Runge–Kutta methods for nonlinear semibounded operators, 
IMA J.  Numerical Anal, 41(1) (2021), 654-682.

\bibitem[RO2018]{RO2018} Hendrik Ranocha, and Philipp \"Offner, 
$L_2$ Stability of Explicit Runge-Kutta Schemes, J Sci Comput 75 (2018) 1040-1056.

\bibitem[RS2018]{RS2018} Thomas Ransford \& Felix L. Schwenninger, Remarks on the Crouzeix--Palencia proof that the numerical range is a ($1+\sqrt{2}$)-spectral set,
SIAM J. Matrix Anal. and Appl., 39(1) (2018) 342-345.

\bibitem[RM1967]{RM1967} Robert D. Richtmyer and K. William Morton, Difference Methods for Initial-Value Problems, 2nd ed., Interscience, New York, 1967.

\bibitem[Rit1953]{Rit1953} R.K. Ritt, A condition that $\lim_{n\rightarrow \infty}  n^{-1}T^n = 0$,  Proc. Amer. Math. Soc. 4 (1953) 898-899.

\bibitem[Sch2016]{Sch2016} Felix L. Schwenninger, Functional calculus estimates for Tadmor–Ritt operators, J. Math. Anal. Appl. 439(1) (2016), 103-124.

\bibitem[SdV2023]{SdV2023} Felix L. Schwenninger, J. de Vries, On abstract spectral constants, ArXiv:2302.05389v1, 2023.

\bibitem[SR1997]{SR1997} Lawrence F. Shampine and Mark W. Reichelt. The MATLAB ode suite. SISSC 18(1) (1997), 1-22.

\bibitem[Spi1993]{Spi1993} M.N. Spijker, Numerical ranges and stability estimates, Applied Numer. Math. 13 (1993) 241-249


\bibitem[Str1994]{Str1994} Bo Strand, Summation by Parts for Finite Difference Approximations for $d/dx$
J. Comp. Phys. 110(1) 1994 47-67.


\bibitem[Str1964]{Str1964} Gilbert Strang,
  Accurate partial difference methods, Numerische Mathematik,
 6(1) 1964, 37-46.

\bibitem[SS2017]{SS2017} Z. Sun and C.-W. Shu, 
Stability of the fourth order Runge-Kutta method
for time-dependent partial differential equations, 
Annals of Mathematical Sciences and Applications
2(2), 255-284, 2017

\bibitem[SS2019]{SS2019} Z. Sun, C.W. Shu, 
Strong stability of explicit Runge-Kutta time discretizations,
SINUM 57 (3), 2019, 1158-1182.

\bibitem[Tad1981]{Tad1981} E. Tadmor, 
The equivalence of $L^2$-stability, the resolvent condition and strict H-stability
Linear Algebra and its Applications 41, 1981, 151-159.

\bibitem[Tad1986]{Tad1986} E. Tadmor, The resolvent condition and uniform power boundedness, Linear Algebra Appl. 80 (1986) 250-252.

\bibitem[Tad1987]{Tad1987} E. Tadmor,
Stability analysis of finite-difference, pseudospectral and Fourier-Galerkin approximations for time-dependent problems
SIAM Review 29 (1987), 525-555.

\bibitem[Tad2002]{Tad2002} Eitan Tadmor, From semi-discrete to fully discrete: stability of Runge-Kutta schemes by the energy method. II
``Collected Lectures on the Preservation of Stability under Discretization'', Lecture Notes from Colorado State University Conference, Fort Collins, CO, 2001 (D. Estep and S. Tavener, eds.) Proceedings in Applied Mathematics 109, SIAM 2002, 25-49.

\bibitem[Vit2004a]{Vit2004a} Pascale Vitse, Functional calculus under the Tadmor-Ritt condition, and free interpolation by
polynomials of a given degree, J. Funct. Anal. 210 (2004) 43-72

\bibitem[Vit2004b]{Vit2004b} Pascale Vitse, 
The Riesz turndown collar theorem giving an asymptotic estimate of the powers of an operator under the Ritt condition,
Rendiconti del Circolo Matematico di Palermo
53 (2004) (2004) 283-312

\bibitem[Vit2005]{Vit2005} Pascale Vitse, A band limited and Besov class functional calculus for Tadmor-Ritt operators, Arch. Math. 85 (2005) 374-385.

\bibitem[Wan2010]{Wan2010} G. Wanner, Kepler, Newton and numerical analysis, Acta Numerica (2010),  561-598.




\end{thebibliography}

\end{document}